\newtheorem{theorem}{Theorem}[section]
\newtheorem{corollary}[theorem] {Corollary}
\newtheorem{definition}[theorem]{Definition}
\newtheorem{example}[theorem]{Example}
\newtheorem{lemma}[theorem]{Lemma}
\newtheorem{proposition}[theorem]{Proposition}
\newtheorem{remark}[theorem]{Remark}
\newcommand\R{\mathbb{R}}
\newcommand\Z{\mathbb{Z}}
\newcommand{\TC}{\mathrm{TC}}
\newcommand{\cat}{\mathrm{cat}}
\newcommand{\wgt}{\mathrm{wgt}}
\newcommand{\gen}{\mathfrak{genus}}
\newcolumntype{x}[1]{>{\centering\arraybackslash}p{#1}}
\title{Higher topological complexity of Seifert fibered manifolds}
\author{Navnath Daundkar}
\address{Department of Mathematics, Indian Institute of Technology Madras, Chennai, India.}
\email{navnath@iitm.ac.in}
\author{Rekha Santhanam}
\address{Department of Mathematics, Indian Institute of Technology Bombay, India}
\email{reksan@iitb.ac.in}
\author{Soumyadip Thandar}
\address{Statistics and Mathematics unit, Indian Statistical Institute, Kolkata.}
\email{stsoumyadip@gmail.com}
\begin{document}
\begin{abstract}
In this article, we investigate the higher topological complexity of oriented Seifert fibered manifolds that are Eilenberg--MacLane spaces $K(G,1)$ with infinite fundamental group $G$.  We first refine the cohomological lower bounds for higher topological complexity by introducing the notion of higher topological complexity weights. As an application, we show that the $r^{\text{th}}$ topological complexity of these manifolds lies in $\{3r-1, 3r, 3r+1\}$, and characterize large families where the value is $3r$ or $3r+1$. Additionally, we establish a sufficient condition for higher topological complexity to be exactly $3r$ when the base surface is orientable and aspherical. Finally, we show that the higher topological complexity of the wedge of finitely many closed, orientable, aspherical $3$-manifolds is exactly $3r+1$.
\end{abstract}

\keywords{ Higher topological complexity, Seifert fibered manifolds, LS-category.}
\subjclass[2020]{55M30, 57N65, 55S99, 55P99.}
\maketitle

\section{Introduction}
Schwarz \cite{Svarc61} introduced the notion of \emph{genus} of a fibration. Given a fibration $p:E\to B $, where $E$ and $B$ are path-connected spaces, the \emph{genus} of  $p$ is defined as the minimum number of open sets $\{U_1, \ldots, U_k\} $ covering $B$ such that $p$ has local continuous sections on each open set $U_i$. This number is denoted as $\gen(p)$.
For a path connected space $X$, consider the fibration
\begin{equation}\label{eq: tcn fibration}
e_{r}: X^I\to X^r  \text{ defined by }  e_{r}(\gamma)=\bigg(\gamma(0), \gamma(\frac{1}{r-1}),\dots,\gamma(\frac{r-2}{r-1}),\gamma(1)\bigg).     
\end{equation}

The \emph{higher topological complexity} \cite[Definition 3.1]{RUD2010}, denoted as $\TC_r(X)$, is defined to be the genus of $e_{r}$.
For $r=2$, this notion was introduced by Farber in \cite{FarberTC} and is known as the topological complexity of a space. The topological complexity of $X$ is denoted by $\TC(X)$. Farber proved that the topological complexity of a space is a homotopy invariant. 

The higher topological complexity of a space $X$ is closely related to its \emph{Lusternik-Schnirelmann category} (\emph{LS-category}), denoted by $\mathrm{cat}(X)$, which is the smallest integer $r$ such that $X$ can be covered by $r$ open subsets $V_1, \dots, V_r$, where the inclusion $V_i\xhookrightarrow{} X$ is null-homotopic for each $i$.
In particular, it was proved in  \cite{RUDIY2014} that,
$\mathrm{cat}(X^{r-1})\leq \TC_r(X)\leq \mathrm{cat}(X^r).$
The higher topological complexity of closed manifolds of dimension $1$ and $2$ is completely known (for reference see \cite[Proposition 5.1]{htcsurfaces}. 
On the other hand, it was shown in 
\cite{cat3mfds} that, the LS-category of closed $3$-manifolds depends only on the fundamental group.
In this article, we explore the question of computing higher topological complexity of orientable $3$-manifolds. 
In particular, we consider orientable Seifert manifolds which are aspherical. 

Seifert introduced the notation for Seifert fibered manifolds in \cite{Seifert1933}. The notation  $M_O:=(O,o,g \mid e: (a_1,b_1),\dots, (a_m,b_m))$ describes an orientable Seifert manifold with an orientable orbit surface $\Sigma_g$
of genus $g$. The notation $M_N:=(O,n,g \mid e: (a_1,b_1),\dots, (a_m,b_m))$ denotes an orientable Seifert manifold with a non-orientable orbit surface $N_g$. Here  $e$ is the Euler number, $m$ is the number of singular fibers and, for each $i$, $(a_i,b_i)$ is a pair of relatively prime integers that characterize the twisting of the $i^{th}$ singular fiber.

Scott in \cite[Lemma 3.1]{Scott83} shows that if a compact orientable $3$-manifold $M$ has infinite fundamental group with universal cover $S^2\times \R$, then $M$ is either $S^2\times S^1$ or $\R P^3\#\R P^3$. Therefore, it follows from  \cite[Proposition 1]{Surveysf} that, any Seifert fibered manifold with infinite fundamental group is a $K(\pi,1)$ space with the exception of  $S^2\times S^1$ and $\R P^3\#\R P^3$. It is easy to see that $\TC(S^2\times S^1)=4$ and also, from \cite{COEHN2019} it follows that $\TC(\R P^3\# \R P^3)=7$. The $\rm{mod}~p$ cohomology ring for oriented aspherical Seifert manifolds with infinite fundamental group was computed by Bryden and Zvengrowski \cite{CoringSiefert}.  A straight forward computation gives that the LS-category of these manifolds is $4$.

In \cite[Theorem 6.6]{MescherSC}, Mescher has shown that if a closed oriented $3$-manifold $M$ that
is not a
rational homology $3$–sphere, and it is not dominated by any closed oriented product
manifold, then $\TC(M)\in \{5,6,7\}$.  Mescher also observed that the total space of a circle bundle over a closed oriented surface of
positive genus with non-zero Euler number satisfies the conditions of this result. 
For this class of manifolds, Neofytidis (\cite{neofytidis2022}) has established a stronger result. In \cite[Page 16]{neofytidis2022}, Neofytidis has shown that the topological complexity of a total space of circle-bundle over $\Sigma_g$ with $g>1$ (these are hyperbolic surfaces) is $6$. 

Oriented Seifert fibered manifolds are natural generalizations of circle bundles over surfaces (both can be orientable and non-orientable). While every circle bundle over a surface is a Seifert fibered manifold, the class of Seifert fibered manifolds is significantly broader (see Example \ref{example}). These manifolds allow for more intricate fiber structures, including singular fibers with non-trivial local models, making the Seifert fibered category a rich and expansive extension beyond the realm of regular circle bundles.

In this article,  we give sharp bounds on the higher topological complexity of aspherical Seifert manifolds satisfying some minor conditions. 
Before the computations, we introduce the notion of higher topological complexity weights to improve the Rudyak's cohomological lower bound \cite[Proposition 3.4]{RUD2010} on the higher topological complexity by proving a general version of \cite[Theorem 6]{GrantCweights} (see also \Cref{thm: c wt2}). 
Using this for an orientable Seifert-fibered manifold $M$, we prove that $\TC_r(M)\in \{3r-1, 3r, 3r+1\}$, and explicitly determine the subclass of Seifert-fibered manifolds for which $\TC_r(M)\in \{3r,3r+1\}$ (see \Cref{thm: TCofSeifert} and \Cref{prop:n2equals1higherTC}). In Proposition \ref{lem:tck-MO-ub}, we show that under certain condition the topological complexity of these manifolds is bounded above by $3r$, and, we also observe that the topological complexity of a aspherical oriented Seifert $3$-manifold $M$, with $\pi_1(M)$ being Heisenberg group, is bounded by $3r$ (see Remark \ref{remk:heisen}).  Additionally, in Corollary \ref{cor: equality}, we show that $\TC_r(M_O)=3r$, when the base surface is orientable and aspherical. Finally, in Proposition \ref{prop: higher tc of iterated wedge} we show that the higher topological complexity of the wedge of finitely many closed, orientable, aspherical $3$-manifolds is exactly $3r+1$.

It is not straightforward to verify whether Seifert fibered manifolds are not dominated by a closed oriented product manifold, which is one of the important conditions of Mescher's result \cite[Theorem 6.6]{MescherSC}. A known nontrivial result in this direction was given by Kotschick and Neofytidis in \cite[Lemma 1]{Kotschick-Neofytidis}. 
They showed that, if $N\to F$ is any  oriented circle bundle with non-zero Euler number over a closed aspherical surface, then every continuous map $\Sigma_g\times S^1\to N$ has degree zero, where $g\geq 1$. In other words, the total spaces of a circle bundle over closed oriented surfaces of positive genus with nonzero Euler number are not dominated by any closed oriented product manifold. Consequently, this gives a class of examples that satisfy Mescher's conditions and have topological complexity $\in\{5, 6 , 7\}$. We not only establish that aspherical Sefiert manifolds have topological complexity $\in \{5,6,7\}$ but  give sharper bounds for a large class of aspherical Seifert manifolds in \Cref{thm: TCofSeifert}, \Cref{prop:n2equals1higherTC} and Corollary \ref{cor: equality}.

\section{Background}

\subsection{Weights of cohomology classes}
The notion of category weight of a cohomology class was introduced by Fadell and Husseini in \cite{cwtFadellHusseini}  to give a lower bound on the LS-category of a space. This was generalized by Farber and Grant  to the weight of a cohomology class with respect to any fibration  (see \cite{GrantCweights,GrantSymm}) to get a lower bound on the topological complexity of a space.

\begin{definition}\label{def: weight}
Let $u\in H^{\ast}(B;G)$ be a cohomology class, where $G$ is an abelian
group. The weight of $u$ with respect to $p:E\to B$, is defined to be the largest integer $\wgt_p(u)$ as
\[\wgt_p(u)=\mathrm{max}\{k : f^{\ast}(u)=0\in H^{\ast}(Y;G) \text{ for all } f:Y\to B
\text { with } \gen(f^{\ast}p)\leq k\},\] where $f^{\ast}p$ is the pullback fibration corresponding to $f$.
\end{definition}

In \cite{GrantSymm}, Farber and Grant improved Schwarz's original cohomological lower bound (see \cite{Svarc61}) on the genus of a fibration. In particular, they proved the following result.
\begin{proposition}[{\cite[Proposition 32, Proposition 33]{GrantSymm}}]\label{prop:genuslowerbd}
Let $p:E\to B$ be a fibration and $u_i \in H^{d_i}(B; G_i)$ be cohomology classes, $i = 1,\dots,l$, such that their cup-product $\prod_{i=1}^l u_i\in   H^d(B; \otimes_{i=1}^l G_i)$ is non-zero, where $d = \sum _{i=1}^ld_i$. Then
\[\gen(p)>\wgt_p(\prod_{i=1}^lu_i)\geq \sum_{i=1}^l\wgt_p(u_i).\]
\end{proposition}

Let $X^I$ be a free path space with compact open topology. 
Consider the free path space fibration  $e_2:X^I\to X^2$ defined by \[e_2(\gamma):=(\gamma(0),\gamma(1)).\] Then $\gen(e_2)=\TC(X)$ (for reference see \cite{FarberTC}). 
Let $u\in H^{\ast}(X)$ be a non-zero cohomology class and $\bar{u}=u\otimes 1-1\otimes u$ be the corresponding zero divisor, then it is known from \cite[Proposition 30]{GrantSymm} that $\wgt_p(\bar{u})\geq 1$.
Therefore, if we have cohomology classes $u_i\in H^{\ast}(X)$ such that $\prod_{i=1}^{l}\bar{u_i}\neq 0$ with $\wgt_{e_2}(\bar{u_i})\geq 2$ for some $i$'s, then \Cref{prop:genuslowerbd} improves the well-known lower bound on $\TC(X)$ given by zero-divisor cup-length.

The following theorem is crucial in finding cohomology classes whose TC-weight is at least two.
\begin{lemma}[{\cite[Lemma 4]{GrantCweights}}]
Let $e_2:X^I\to X^2$  be a free path space fibration and $f = (\phi,\psi): Y \to X\times X$ be a map where $\phi,\psi$ denote the projections of $f$ onto the first and second factors of $X\times X$, respectively. Then  $\gen(f^{\ast}e_2) \leq k$ if and only if $Y=\cup_{i=1}^kA_i$, where $A_i$ for $i=1,\cdots k$ are open in $Y$ and $\phi|_{A_i}\simeq \psi|_{A_i}:A_i\to X$ for any $i=1,\dots ,k$.
\end{lemma}

Next, we recall the results of Farber and Grant describing zero divisors with TC-weights at least two. We begin with the definition of stable cohomology operation. Let $R$ and $S$ be abelian groups.
\begin{definition}
A degree $d$ stable cohomology operation 
$\mu:H^{\ast}(-;R)\to H^{\ast+d}(-;S)$
is a family of natural transformations $\mu:H^n(-;R)\to H^{n+d}(-;S)$,  for $n\in \mathbb{Z}$ which commutes with the suspension isomorphisms. 
\end{definition}
It follows that the stable cohomology operation $\mu$ commutes with all connecting homomorphisms in Mayer-Vietoris sequences and each
homomorphism $\mu$ is a group homomorphism.
The excess of a stable cohomology operation $\mu$ is the positive integer denoted as $e(\mu)$ and defined as $e(\mu):=1+\max\{k \mid \mu(u)=0 \text{ for all } u\in H^{k}(X;R)\}.$

Let $p_i:X\times X\to X$ be the projection onto the $i^{\text{th}}$ factor of $X\times X$ for $i=1,2$. Note that $\bar{u}=p_2^{\ast}(u)-p_1^{\ast}(u)$.
Let $\mu$ be any stable cohomology operation. Then, using naturality and additivity of $\mu$, 
we get
$$\mu(\bar{u})=\mu(1\otimes u-u\otimes 1)=\mu(p_2^{\ast}(u)-p_1^{\ast}(u))=p_2^{\ast}(\mu(u))-p_1^{\ast}(\mu(u))=\overline{\mu(u)}.$$

The following result shows how to construct zero-divisors with TC-weights at least $2$.
\begin{theorem}[\label{thm:wgttc>2}{\cite[Theorem 6]{GrantCweights}}]
 Let $\mu: H^{\ast}(-;R)\to H^{{\ast}+i}(-;S)$ be the stable cohomology operation of degree $i$ and $e(\mu)\geq n$. Then for any $u\in H^n(X;R)$, the TC-weight $\wgt_{e_2}(\overline{\mu(u)})\geq 2$. 
\end{theorem}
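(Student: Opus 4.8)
The plan is to prove the inequality straight from the definition of $\TC$-weight, by showing that $f^{\ast}\bigl(\overline{\mu(u)}\bigr)=0$ for every map $f=(\phi,\psi)\colon Y\to X\times X$ with $\gen(f^{\ast}\pi)\le 2$. The first step is a formal computation. Writing $p_1,p_2\colon X\times X\to X$ for the two projections and using $p_1\circ f=\phi$, $p_2\circ f=\psi$ together with $\overline{\mu(u)}=p_2^{\ast}(\mu(u))-p_1^{\ast}(\mu(u))$, we obtain
\[
 f^{\ast}\bigl(\overline{\mu(u)}\bigr)=\psi^{\ast}(\mu(u))-\phi^{\ast}(\mu(u)).
\]
Since $\mu$ is a natural transformation, $\psi^{\ast}(\mu(u))=\mu(\psi^{\ast}u)$ and $\phi^{\ast}(\mu(u))=\mu(\phi^{\ast}u)$; since each component of $\mu$ is a group homomorphism, these combine to give $f^{\ast}\bigl(\overline{\mu(u)}\bigr)=\mu(w)$, where $w:=\psi^{\ast}u-\phi^{\ast}u\in H^{n}(Y;R)$. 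So it suffices to show that $\mu(w)=0$ in $H^{n+i}(Y;S)$ whenever $\gen(f^{\ast}\pi)\le 2$.

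Next I would feed in the description of $\gen(f^{\ast}\pi)$ recalled above (\cite[Lemma 4]{GrantCweights}): $\gen(f^{\ast}\pi)\le 2$ precisely when $Y=A_1\cup A_2$ with $A_1,A_2$ open in $Y$ and $\phi|_{A_j}\simeq\psi|_{A_j}$ for $j=1,2$ (the case $\gen(f^{\ast}\pi)\le 1$ is subsumed, taking $A_2=A_1=Y$, and there $w=0$ outright). Homotopic maps induce the same homomorphism in cohomology, so $w|_{A_j}=(\psi|_{A_j})^{\ast}u-(\phi|_{A_j})^{\ast}u=0$ for $j=1,2$. Hence $w$ lies in the kernel of the restriction map $H^{n}(Y;R)\to H^{n}(A_1;R)\oplus H^{n}(A_2;R)$, and by exactness of the Mayer--Vietoris sequence of the open cover $\{A_1,A_2\}$ we may write $w=\delta(v)$ for some $v\in H^{n-1}(A_1\cap A_2;R)$, where $\delta$ denotes the connecting homomorphism.

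Finally, a stable cohomology operation commutes with all Mayer--Vietoris connecting homomorphisms (as noted in the excerpt), so
\[
 \mu(w)=\mu(\delta(v))=\delta(\mu(v)),
\]
now with $\delta\colon H^{n-1+i}(A_1\cap A_2;S)\to H^{n+i}(Y;S)$. The class $v$ sits in degree $n-1$, and since $e(\mu)\ge n$, by the definition of the excess the operation $\mu$ annihilates $H^{j}(-;R)$ for every $j<e(\mu)$, in particular for $j=n-1$. Therefore $\mu(v)=0$, hence $\mu(w)=0$ and $f^{\ast}\bigl(\overline{\mu(u)}\bigr)=0$. As $f$ with $\gen(f^{\ast}\pi)\le 2$ was arbitrary, this shows $\wgt_{\pi}\bigl(\overline{\mu(u)}\bigr)\ge 2$. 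There is no deep obstacle in this argument; the only point that needs care is the compatibility of $\mu$ with the Mayer--Vietoris boundary map, which is exactly why the proof must be routed through \cite[Lemma 4]{GrantCweights}: that lemma converts the hypothesis $\gen(f^{\ast}\pi)\le 2$ into an honest open cover of $Y$, on which Mayer--Vietoris — and hence the naturality of the stable operation under connecting homomorphisms — is available.
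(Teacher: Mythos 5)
Your proof is correct and follows exactly the argument the paper uses for its generalization (\Cref{thm: c wt2}): convert $\gen(f^{\ast}\pi)\leq 2$ into an open cover with homotopic restrictions via \cite[Lemma 4]{GrantCweights}, realize the pulled-back zero-divisor as a Mayer--Vietoris boundary of a class in degree $n-1$, and kill it using the commutation of $\mu$ with $\delta$ together with $e(\mu)\geq n$. Nothing is missing.
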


We use  \Cref{thm:wgttc>2} to compute the lower bound on the topological complexity of oriented Seifert fibered manifolds.
\subsection{Cohomology of Seifert fibered manifolds}

 We recall the results related to the $\rm{mod}~ p$ cohomology ring of Seifert fibered manifolds from \cite{CoringSiefert}. We first set up the notation required for the same.

\begin{enumerate}
\item For any prime $p$, assume without loss of generality that $a_1,\dots, a_{n_p} \equiv 0 (\rm {mod } ~p)$ and $a_{n_{p+1}},\dots,a_m \not\equiv 0 (\rm {mod } ~p)$. 
In this case there exist integers $a_1', \dots, a_{n_p}'$ , such that $a_1 = pa_1' ,\dots,a_{n_p} = pa_{n_p}'$. 
\item We have $(a_i,b_i)=1$ and  $c_i, d_i\in \mathbb{Z}$ such that $a_id_i-b_ic_i=1$ for $1\leq i\leq m$.  
Then for $1\leq i\leq n_p$, $b_i, c_i\not\equiv 0 (\rm {mod}~ p)$.

\item When $n_p=0$, that is $a_i\not\equiv 0 (\rm {mod}~ p)$ for all $1\leq i\leq m$, let $r$ be a positive integer such that $b_i\equiv 0 (\rm{mod}~ p)$ and $b_{r+j}\not\equiv 0 (\rm{mod}~ p)$ for $1\leq j\leq m-r$. For $1\leq i\leq r$, there exist $b_i'$ such that $b_i=pb_i'$. 
Let $A=\prod_{i=1}^{m}a_i$, $A_i=A/a_i\in \mathbb{Z}$ and $C=\sum_{i=1}^{m}b_iA_i$. Note that $A\not\equiv 0(\rm{mod} ~p)$ when $n_p=0$. 
\end{enumerate}

We note that without loss of generality in \cite{CoringSiefert}, the notation $n$ is used for $n_p$. We will use $n_p$ to clarify which $p$ we are discussing in our notation.   

 \begin{proposition}[{\cite[Theorem 1.1]{CoringSiefert}}\label{thm: cring1}]
Let $\delta_{jk}$ denote the Kronecker delta. Suppose $M_O= (O,o;g ~|~ e: (a_1,b_1),\dots,(a_m,b_m))$.
If $n_p > 0$, then as a graded vector space,
$$H^{\ast}(M_O;\mathbb{Z}_2)=\mathbb{Z}_2\{1,\alpha_i,\theta_l,\theta_l',\beta_i,\phi_l,\phi'_l,\gamma~|~2\leq i\leq n_p, ~~1\leq l\leq g\}$$
where $|\alpha_i|=|\theta_l|=|\theta'_l|=1$, $|\beta_i|=|\phi_l|=|\phi_l'|=2$, and $|\gamma|=3$. 
Let $\beta_1 = - \sum_{i=2} ^{n_p} \beta_i$. The non-trivial cup products in $H^{\ast}(M_O;\Z_p)$ are given by:
\begin{enumerate}
    \item For $p=2$ and $2\leq i$, $j\leq n_2$,
$\alpha_i\cdot\alpha_j=\binom{a_1}{2}\beta_1+\delta_{ij}\binom{a_i}{2}\beta_i.$
Moreover, if $2\leq k\leq n_2$ as well, then
\[\alpha_i\cdot\alpha_j\cdot\alpha_k=\binom{a_1}{2}\gamma \hspace{0.5cm} ~~\text{ if } i\neq j \text{ or } j \neq k,~~ \text{and}
~~\alpha_i^3=\bigg[\binom{a_1}{2}+\binom{a_i}{2}\bigg]\gamma.\]
\item For any prime $p$, ~$2\leq j\leq n_p$ and $1\leq l\leq g$ we have,
$\alpha_j\cdot\beta_j=-\gamma \hspace{0.5cm}\text{and} \hspace{0.5cm}\theta_l\cdot\phi_l=-\gamma.$
In addition, the mod-$p$ Bockstein on $H^1(M_O, \Z_p)$ is given by: 
$$B_p(\alpha_j)=-a'_jc_j\beta_j+a'_1c_1\beta_1,~~ B_p(\theta_l)=0.$$
\end{enumerate}
\end{proposition}

\begin{remark}\label{remk:main123}
For both Seifert-fibered manifolds of type $M_O$ and $M_N$, from \cite[Remark 1.2 and Remark 1.5]{CoringSiefert} for $p = 2$ we get, $B_2(\alpha_i)=\alpha^2_i$.
   
\end{remark}

\begin{proposition}[{\cite[Theorem 1.4]{CoringSiefert}}\label{thm: cring 3}]
Suppose $M_N=(O,n;g ~|~ e: (a_1,b_1),\dots,(a_m,b_m))$. If $n_p > 0$, then as a graded vector space,
$H^{\ast}(M_N;\Z_p)=\Z_p\{1,\alpha_i,\theta_l,\beta_i,\phi_l,\gamma ~~: ~~2\leq i\leq n_p,~1\leq l\leq g\}.$
Let $\beta_1 = - \sum_{i=2} ^{n_p} \beta_i -2\sum_{j=1}^g\phi_j$. The non-trivial cup products in $H^{\ast}(M_N;\Z_p)$ are given by:
\begin{enumerate}
    \item For $p=2$ and $2\leq i$, $j\leq n_2$,
$\alpha_i\cdot\alpha_j=\binom{a_1}{2}\beta_1+\delta_{ij}\binom{a_i}{2}\beta_i$.
Moreover, if $2\leq k\leq n_2$ as well, then
\[\alpha_i\cdot\alpha_j\cdot\alpha_k=\binom{a_1}{2}\gamma \hspace{0.5cm}~~ \text{ if } i\neq j \text{ or } j \neq k,~~\text{and} 
~~\alpha_i^3=\bigg[\binom{a_1}{2}+\binom{a_i}{2}\bigg]\gamma.\]
\item For any prime $p$, ~$2\leq j\leq n_p$ and $1\leq l\leq g$ we have,
$$\alpha_j\cdot\beta_j=-\gamma \hspace{0.5cm}\text{and} \hspace{0.5cm}\theta_l\cdot\phi_l=-\gamma.$$ 
In addition, the mod-$p$ Bockstein on $H^1(M_N, \Z_p)$ is given by: 
$B_p(\alpha_j)=-a'_jc_j\beta_j+a'_1c_1\beta_1,~~ B_p(\theta_l)=0.$
\end{enumerate}
\end{proposition}

When $n_p=0$, the cohomology ring for Seifert manifolds of type $M_O$ is computed in \cite{CoringSiefert}, which we describe now.
\begin{proposition}[{\cite[Theorem 1.3]{CoringSiefert}}\label{thm: cring2}]
Suppose $M_O= (O,o;g~ |~ e: (a_1,b_1),\dots,(a_m,b_m))$ and $n_p = 0$. Define $r$ so that $b_i \equiv 0 \text{(mod p)}$ for $1\leq i\leq r$, $b_i\not\equiv 0 \text{(mod p)}$ for $r+1\leq i\leq m$. When $Ae+C \not\equiv 0 \pmod{p}$, $H^{\ast}(M_O;\mathbb{Z}_p)$ has generators $\theta_l,\theta'_l,\phi_l,\phi'_l$, $1\leq l \leq g$ (as described in \Cref{thm: cring1}). If $Ae+C \equiv 0 \text{(mod p)}$, then 
$$H^{\ast}(M; \mathbb{Z}_p)=\mathbb{Z}_p\{1,\alpha,\theta_l,\theta_l',\beta,\phi_l,\phi'_l,\gamma~|~1\leq l\leq g\},$$
where $|\alpha|=1$ and $|\beta|=2$. 
The non-trivial cup products are given by: 
\begin{enumerate}
\item For $p=2$, if $Ae+C \equiv 0 \text{(mod 2)}$,
$\alpha^2=\bigg[q+\frac{1}{2}(Ae+C)\bigg]\beta$,
where $q$ is defined to be number of $b_i$, $1\leq i\leq r$, which are congruent to $2$ (mod $4$). 
\item If $Ae+C\equiv 0 \text{(mod p)}$ then for $1\leq l\leq g$, 
\[\alpha\cdot\theta_l=\phi_l, 
~~\alpha\cdot\theta'_l=\phi_l',
~~\theta_l\cdot\theta_l'=\beta,
~~\alpha\cdot\beta=-\gamma, ~~\theta_l\cdot \phi_l'=\theta_l'\cdot \phi_l=\gamma.\]
Moreover, the mod-$p$ Bockstein on $H^1(M_O;\Z_p)$ is given by:
$$B_p(\alpha)=-A^{-1}\bigg[\sum_{i=1}^rb'_iA_i+\frac{Ae+C}{p}\bigg]\beta\in H^2(M_O;\mathbb{Z}_p), ~~~ B_p(\theta_l)=B_p(\theta_l')=0.$$
\end{enumerate}
\end{proposition}

\section{Higher cohomology weights}
We begin by defining the a higher analogue of the $\TC$-weight. Recall the Definition \ref{def: weight} of the weight of a cohomology class with respect to the fibration. 
\begin{definition}
The $\TC_r$-weight of a cohomology class $u\in H^{\ast}(X^r)$ with respect to the fibration $e_r$ is defined as the $\wgt_{e_r}(u)$.
\end{definition}

In this section, we show how to obtain cohomology classes in the kernel of $d_r^{\ast}: H^{\ast}(X^r)\to H^{\ast}(X)$ whose $\TC_r$-weights are at least $2$. We then use this to compute the higher topological complexity of Seifert fibered manifolds. 
\begin{lemma}\label{lem: htc gen}
Let $f: Y\to X^r$ be a continuous map with $f=(\phi_1,\dots,\phi_r)$, where $\phi_i$'s are the projection of $f$ onto the $i$th factor of $X^r$. Then $\gen(f^{\ast}e_r)\leq k$ if and only if there exist an open cover $\{U_1,\dots,U_k\}$ of $Y$ such that $\phi_i|_{U_j}\simeq \phi_l|_{U_j}$ for all $1\leq i,l\leq r$ and $1\leq j\leq k$.
\end{lemma}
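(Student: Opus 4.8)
The plan is to unwind the definition of the Schwarz genus of the pullback fibration $f^{\ast}\pi$ and translate the existence of local sections into homotopy statements about the coordinate maps $\phi_i$, following the blueprint of \cite[Lemma 4]{GrantCweights} but with $n$ evaluation points in place of two. The first step is to describe partial sections concretely: the total space of $f^{\ast}\pi$ is $\{(y,\gamma)\in Y\times X^I : f(y)=\pi(\gamma)\}$, so a section over an open set $U\subseteq Y$ is a continuous map $s\colon U\to X^I$ with $\pi(s(u))=f(u)$ for all $u\in U$. By the exponential law (valid since $I$ is locally compact Hausdorff and $X^I$ carries the compact-open topology), such an $s$ is the same datum as a continuous map $H\colon U\times I\to X$ with $H(u,\tfrac{i-1}{n-1})=\phi_i(u)$ for all $u$ and all $1\le i\le n$. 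Thus $\gen(f^{\ast}\pi)\le k$ becomes equivalent to the existence of an open cover $U_1,\dots,U_k$ of $Y$ equipped with such ``multi-homotopies'' $H_j\colon U_j\times I\to X$.

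For the forward direction I would take each $H_j$, restrict it to the strip $U_j\times[\tfrac{i-1}{n-1},\tfrac{i}{n-1}]$, and rescale linearly in the interval coordinate to obtain a homotopy $\phi_i|_{U_j}\simeq\phi_{i+1}|_{U_j}$ for $1\le i\le n-1$; since homotopy is an equivalence relation, this gives $\phi_i|_{U_j}\simeq\phi_l|_{U_j}$ for all $i,l$. For the converse I would choose, on each $U_j$, homotopies $G^{(j)}_i$ from $\phi_i|_{U_j}$ to $\phi_{i+1}|_{U_j}$ for $1\le i\le n-1$, reparametrise $G^{(j)}_i$ onto $[\tfrac{i-1}{n-1},\tfrac{i}{n-1}]$, and glue the pieces by the pasting lemma into a single $H_j\colon U_j\times I\to X$. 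The gluing is legitimate because consecutive pieces agree on the division points ($G^{(j)}_{i-1}(\cdot,1)=\phi_i=G^{(j)}_i(\cdot,0)$), and by construction $H_j(\cdot,\tfrac{i-1}{n-1})=\phi_i|_{U_j}$; running the exponential law backwards then yields the desired partial section of $f^{\ast}\pi$ over $U_j$, so $\gen(f^{\ast}\pi)\le k$.

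I do not expect a genuine obstacle; the argument is a direct $n$-fold generalisation of the $n=2$ case. The only points requiring care are the continuity bookkeeping — invoking the exponential law in the correct generality and checking that the finitely many closed strips $U_j\times[\tfrac{i-1}{n-1},\tfrac{i}{n-1}]$ satisfy the hypotheses of the pasting lemma — and getting the linear reparametrisations right so that the glued pieces match exactly at the points $\tfrac{i}{n-1}$, which is precisely where transitivity of the homotopy relation enters. Specialising to $n=2$ recovers \cite[Lemma 4]{GrantCweights} verbatim.
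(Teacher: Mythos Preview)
Your proposal is correct and follows essentially the same route as the paper's proof: both directions identify a partial section of $f^{\ast}\pi$ over $U_j$ with a map $U_j\to X^I$ hitting the prescribed evaluation points, then restrict to the subintervals $[\tfrac{i-1}{n-1},\tfrac{i}{n-1}]$ (forward) or concatenate chosen homotopies over those subintervals (converse), invoking transitivity of homotopy exactly as you do. The only cosmetic difference is that the paper writes out the explicit piecewise formula for the glued map $g_j$ and works directly with the projection $\tilde f\colon f^{\ast}X^I\to X^I$, whereas you phrase the same construction via the exponential law and the pasting lemma; the content is identical, and the paper likewise remarks that this is the $n$-variable analogue of \cite[Lemma~4]{GrantCweights}.
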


\begin{proof}
Suppose that  $\gen(f^{\ast}e_r)\leq k$. Then there exists an open cover  $\{U_1,\dots,U_k\}$ of $Y$ and sections $s_j: U_j\to f^{\ast}X^{I}$ of $f^{\ast}e_r$ for $1\leq j\leq k$. 
Let $\Tilde{f}:f^{\ast}X^I=Y\times_f X^I\to X^I$ be the projection onto the second factor. For $1\leq j\leq k$, the commutativity of a diagram in \eqref{eq: comm diagram1} shows 
\begin{equation}\label{eq: comm diagram1}
 \xymatrix{
f^{\ast}X^I\ar[d]_{f^{\ast}e_r} \ar[r]^{\Tilde{f}}& X^I\ar[d]^{e_r}\\
Y \ar[r]^{f} & X^r \\ 
}   
\end{equation}
$e_r\circ \Tilde{f}\circ s_j=f|_{U_j}$. Therefore, for any $a\in U_j$, we have, 
\[\bigg(\Tilde{f}\circ s_j(a)(0),\Tilde{f}\circ s_j(a)(\frac{1}{r-1}),\dots, \Tilde{f}\circ s_j(a)(\frac{r-2}{r-1}),\Tilde{f}\circ s_j(a)(1)\bigg)=(\phi_1(a),\dots, \phi_{r}(a)).\]
We now define a homotopy $F^i_j:U_j\times I\to X$ between $\phi_{i}|_{U_j}$ and $\phi_{i+1}|_{U_j}$ by \[F^i_j(a,t)=\Tilde{f}\circ s_j(a)(\frac{t+i-1}{r-1}),\] for $1\leq i\leq r-1$. Note that, $F^i_j(a,0)=\phi_{i}|_{U_j}$ and $F^i_j(a,1)=\phi_{i+1}|_{U_j}$ for $1\leq j\leq k$. Since being homotopic is a transitive property, it follows that $\phi_i|_{U_j}\simeq \phi_l|_{U_j}$ for any $1\leq i, l\leq r$.

Conversely, let $f=(\phi_1,\dots,\phi_r):Y\to X^r$ be such that there is an open cover $\{U_1,\dots,U_k\}$ of $Y$ such that $\phi_i|_{U_j}\simeq \phi_l|_{U_j}$ for all $1\leq i,l\leq r$ and $1\leq j\leq k$. Let $G_i:U_j\times I\to X$ be a homotopy between $\phi_i|_{U_j}$ and $\phi_{i+1}|_{U_j}$. We define sections on $U_j$ for, $1\leq j\leq k$, via $G_i$'s. 
\vspace{-3mm}
\begin{equation}\label{eq: diag2}
 \xymatrix{
U_j\times_{f} X^I \ar[r]^{\Tilde{f}}& X^I\\
U_j\ar[ru]_{g_j}\ar[u]^{s_j} &  \\ 
}   
\end{equation}
It follows from the diagram in
\eqref{eq: diag2} that any section $s_j:U_j\to f^{\ast}X^I=Y\times _f X^I$ has a form $s_j(a)=(a,g_j(a))$, where $g_j:U_j\to X^I$ for $1\leq j\leq k$. Therefore, to define sections $s_j$, it is enough to define maps $g_j$'s.
We partition the interval $[0,1]$ into smaller intervals $ [\frac{n-1}{r-1},\frac{n}{r-1}]$ for $1\leq n\leq r$ and define $g_j$ as follows:
\begin{equation}\label{eq: gj} g_j(a)(t)=G_{n}(a, (r-1)t-n+1), ~~ t\in \bigg[\frac{n-1}{r-1},\frac{n}{r-1}\bigg].
\end{equation}

One can observe that $e_r\circ g_j=f|_{U_j}$. Therefore, $s_j:U_j\to f^{\ast}X^I$ is a well-defined section of $f^{\ast}e_r$ for $1\leq j\leq k$. Thus, $\gen(f^{\ast}e_r)\leq k$.
\end{proof}

We now explain how to find cohomology classes in $\ker( d_r^{\ast})$ with $\TC_r$-weights at least $2$.
Let $v\in H^m(X;R)$ be a non-zero class and $p_i$ is the projection of $X^r$ onto the $i^{th}$ factor. Then we  observe that $\bar{v}_{ij}=p_{j}^{\ast}(v)-p_i^{\ast}(v)\in \ker(d_r^{\ast})$. If $\mu$ is a stable cohomology operation, then \[\mu(\bar{v}_{ij})=\mu(p_{j}^{\ast}(v)-p_i^{\ast}(v))=p_{j}^{\ast}(\mu(v))-p_i^{\ast}(\mu(v))=\overline{\mu(v)_{ij}}.\]
\begin{theorem}\label{thm: c wt2}
 Let $\mu: H^\ast(-;R)\to H^{\ast+d}(-;S)$ be a stable cohomology operation of degree $d$  and $e(\mu)$ be of its excess such that $e(\mu)\geq m$. Then for any $v\in H^m(X;R)$, \[\wgt_{e_r}(\mu(\bar{v}_{ij}))\geq 2\] for $1\leq i,j\leq n$. 
\end{theorem}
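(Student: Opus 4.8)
The plan is to reduce this statement to the $n=2$ case already established in \Cref{thm:wgttc>2} (i.e. \cite[Theorem 6]{GrantCweights}), by exploiting \Cref{lem: htc gen} to relate the pullback of the $\TC_n$-fibration $\pi:X^I\to X^n$ along a map $f=(\phi_1,\dots,\phi_n):Y\to X^n$ to a condition on pairwise homotopies of the component maps $\phi_i$. First I would unwind the definition of $\wgt_{\pi}$: to show $\wgt_\pi(\overline{\mu(v)}_{ij})\ge 2$, I must check that for every map $f=(\phi_1,\dots,\phi_n):Y\to X^n$ with $\gen(f^\ast\pi)\le 2$, the pullback $f^\ast(\overline{\mu(v)}_{ij})=0$ in $H^\ast(Y;S)$. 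Fix such an $f$. By \Cref{lem: htc gen}, $\gen(f^\ast\pi)\le 2$ means there is an open cover $Y=U_1\cup U_2$ with $\phi_i|_{U_l}\simeq \phi_j|_{U_l}$ for all indices; in particular, writing $g_{ij}:=(\phi_i,\phi_j):Y\to X\times X$, we get that $g_{ij}|_{U_l}$ factors (up to homotopy) through the diagonal, i.e. $\gen(g_{ij}^\ast\pi_2)\le 2$ where $\pi_2:X^I\to X\times X$ is the free path fibration.

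Next I would observe that the zero-divisor $\overline{\mu(v)}_{ij}=p_j^\ast(\mu(v))-p_i^\ast(\mu(v))\in H^\ast(X^n;S)$ is pulled back from $H^\ast(X\times X;S)$: indeed, if $\mathrm{pr}_{ij}:X^n\to X\times X$ is the projection onto the $i$th and $j$th factors, then $\overline{\mu(v)}_{ij}=\mathrm{pr}_{ij}^\ast(\overline{\mu(v)})$ where $\overline{\mu(v)}=q_2^\ast(\mu(v))-q_1^\ast(\mu(v))$ is the usual $\TC$-zero-divisor for the class $\mu(v)$ — here I need to be slightly careful that $\mu(v)\in H^{m+k}(X^n;S)$ is not a priori pulled back from $X$, but $q_1,q_2:X\times X\to X$ composed with $\mathrm{pr}_{ij}$ are the $i$th and $j$th coordinate projections of $X^n$, so the formula $\mathrm{pr}_{ij}^\ast(q_s^\ast(\mu(v))) = p_?^\ast(\mu(v))$ does not quite typecheck unless $v$ itself is pulled back from $X$. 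The clean fix is to not factor through $X\times X$ at the level of $v$, but to work directly: set $\phi:=\phi_i$, $\psi:=\phi_j$, and note $f^\ast(\overline{\mu(v)}_{ij}) = \psi^\ast(\mu(v)\text{-restricted}) - \phi^\ast(\cdots)$, which by naturality of $\mu$ equals $\mu(\psi^\ast v') - \mu(\phi^\ast v')$ for the appropriate restriction; then apply the Mayer–Vietoris argument of \cite[Theorem 6]{GrantCweights} to the cover $\{U_1,U_2\}$ of $Y$ together with the homotopies $\phi|_{U_l}\simeq\psi|_{U_l}$.

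So the concrete key steps are: (1) fix $f$ with $\gen(f^\ast\pi)\le2$ and extract, via \Cref{lem: htc gen}, the cover $Y=U_1\cup U_2$ and homotopies $\phi_i|_{U_l}\simeq\phi_j|_{U_l}$; (2) on each $U_l$, since $\phi_i|_{U_l}\simeq\phi_j|_{U_l}$, the difference $(\phi_j^\ast - \phi_i^\ast)$ of pulled-back classes vanishes on $U_l$ in \emph{degrees $\le e(\mu)$} for classes on which $\mu$ has not yet "switched on" — more precisely, $\overline{\mu(v)}_{ij}$ restricted to $U_l$ is $\mu$ applied to a class that is itself the image of a relative class in $H^\ast(Y,U_l)$ after using that $(\phi_j^\ast-\phi_i^\ast)v$ dies on $U_l$ because $e(\mu)\ge m=|v|$; (3) run Mayer–Vietoris for $(U_1,U_2)$: the two relative lifts in $H^\ast(Y,U_1)$ and $H^\ast(Y,U_2)$, after applying the stable operation $\mu$ (which commutes with the connecting maps), multiply to land in $H^\ast(Y, U_1\cup U_2)=H^\ast(Y,Y)=0$, forcing $f^\ast(\overline{\mu(v)}_{ij})=0$; (4) conclude $\wgt_\pi(\overline{\mu(v)}_{ij})\ge 2$. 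The main obstacle I anticipate is step (2): correctly identifying the relative cohomology class on $U_l$ whose image is $\overline{\mu(v)}_{ij}|_{U_l}$, which hinges on showing that $(\phi_j^\ast - \phi_i^\ast)v$ not merely restricts to zero on $U_l$ but lifts to a \emph{relative} class of degree $m\le e(\mu)$, so that after applying $\mu$ it becomes nullhomotopic-detected in the relative group — this is exactly the excess bookkeeping of \cite[Theorem 6]{GrantCweights}, and the only new content here is checking it goes through verbatim with $\phi_i,\phi_j$ in place of the two coordinate projections, which \Cref{lem: htc gen} guarantees.
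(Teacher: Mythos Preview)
Your overall framework is right and matches the paper's approach: fix $f=(\phi_1,\dots,\phi_n)$ with $\gen(f^\ast\pi)\le 2$, invoke \Cref{lem: htc gen} to get $Y=U_1\cup U_2$ with $\phi_i|_{U_l}\simeq\phi_j|_{U_l}$, and then run a Mayer--Vietoris argument. But steps (2) and (3) as you wrote them miss the actual mechanism, and step (3) in particular is wrong.

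There is no multiplication of relative lifts here; that is the device for proving superadditivity of weights under cup product (\Cref{prop:genuslowerbd}), not for this statement. The correct argument, which is what the paper does, is much simpler. Since $\phi_i|_{U_l}\simeq\phi_j|_{U_l}$ for $l=1,2$, the class $f^\ast(\bar v_{ij})=\phi_j^\ast(v)-\phi_i^\ast(v)\in H^m(Y;R)$ restricts to zero on both $U_1$ and $U_2$. By exactness of the Mayer--Vietoris sequence, $f^\ast(\bar v_{ij})=\delta(u)$ for some $u\in H^{m-1}(U_1\cap U_2;R)$. Now apply $\mu$: since $\mu$ is stable it commutes with $\delta$, so
\[
f^\ast\big(\overline{\mu(v)}_{ij}\big)=\mu\big(f^\ast(\bar v_{ij})\big)=\mu(\delta(u))=\delta(\mu(u)).
\]
But $|u|=m-1$ and $e(\mu)\ge m$, which by the definition of excess means $\mu$ vanishes on all classes of degree $\le m-1$; hence $\mu(u)=0$ and the pullback vanishes. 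That is the entire content: the excess hypothesis is used exactly once, to kill the degree-$(m-1)$ preimage under $\delta$, not to control anything in degree $m$ on the $U_l$'s as your step (2) suggests.

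Two further comments. First, the typechecking confusion you flagged is real but is a typo in the statement: $v$ is meant to lie in $H^m(X;R)$, not $H^m(X^n;R)$, so that $p_i^\ast(v)$ makes sense. Second, your \emph{initial} plan---reduce to the $n=2$ case via $g_{ij}=(\phi_i,\phi_j):Y\to X\times X$---actually works cleanly once that typo is fixed: \Cref{lem: htc gen} gives the pairwise homotopies, the $n=2$ analogue (\cite[Lemma 4]{GrantCweights}) then yields $\gen(g_{ij}^\ast\pi_2)\le 2$, and since $\bar v_{ij}=\mathrm{pr}_{ij}^\ast(\bar v)$ you get $f^\ast(\overline{\mu(v)}_{ij})=g_{ij}^\ast(\overline{\mu(v)})=0$ directly from \Cref{thm:wgttc>2}. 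This is a legitimate alternative route that the paper does not take; it trades the Mayer--Vietoris computation for a formal reduction, at the cost of invoking both directions of \Cref{lem: htc gen} (and its $n=2$ version). You abandoned it unnecessarily.
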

\begin{proof}
Let $f=(\phi_1,\dots,\phi_r):Y\to X^r$ be continuous function with $\gen(f^\ast e_r)\leq 2$. Then using \Cref{lem: htc gen}, we have $Y=U\cup V$ such that $\phi_i|_{U}\simeq \phi_{j}|_{U}$ and $\phi_i|_{V}\simeq \phi_{j}|_{V}$ for all $1\leq i, j\leq r$. 
We need to show that for any $v\in H^{m}(X;R)$, we have $f^\ast(\overline{\mu(v)_{ij}})=0$ for $1\leq i,j\leq r$. 
Now consider the Mayer-Vietoris sequence for $Y=U\cup V$
\[\cdots\to H^{m-1}(U\cap V;R)\stackrel{\delta}{\to}H^m(Y;R)\to
H^m(U;R)\oplus H^m(V;R)\to\cdots\]

Observe that $f^{\ast}(\bar{v}_{ij})=\phi_{j}^{\ast}(v)-\phi_{i}^{\ast}(v)$. 
Since $\phi_j|_{U}\simeq \phi_{i}|_{U}$ and $\phi_j|_{V}\simeq \phi_{i}|_{V}$ for all $1\leq i,j\leq r$, there exists $u\in H^{m-1}(U\cap V;R)$ such that $\delta(u)=f^{\ast}(\bar{v}_{ij})$. 
Therefore, \[f^{\ast}(\overline{\mu(v)_{ij}}) = f^{\ast}(\mu(\bar{v}_{ij}))=\mu(f^{\ast}(\bar{v}_{ij}))=\mu(\delta(u))=\delta(\mu(u))=0,\] since the boundary homomorphism $\delta$ commutes with $\mu$ and $e(\mu)\geq m$. This implies $\wgt_{e_r}(\mu(\bar{v}_{ij}))\geq 2$ for $1\leq i,j\leq r$.  
\end{proof}

\begin{remark}
The proofs of \Cref{lem: htc gen} and \Cref{thm: c wt2} are based on the similar techniques used in \cite[Lemma 4]{GrantCweights} and \cite[Theorem 6]{GrantCweights}. 
\end{remark}

\section{Higher topological complexity of Seifert fibered manifolds}
In this section, we show that in most cases the higher topological complexity $\TC_r$ of aspherical Seifert fibered manifolds is $3r$ or $3r+1$. The bounds are obtained by using cohomological lower bound, and we get an exact value of higher topological complexity for these manifolds which satisfy the assumptions of Proposition \ref{lem:tck-MO-ub}.
In \Cref{prop:n2equals0higherTC}, for an another class of Seifert manifolds, their topological complexity lies  between $3r-1$ and $3r+1$.  Here, we need to use the cohomology classes  with higher weights to compute the lower bound.

The non-higher analogue of the following lemma and its proof are essentially contained in \cite[Corollary 3.8]{grantfibsymm}. We adopt those arguments in the higher setting for the sake of completeness.
Throughout this section, we denote $G$ as a finitely generated group, $Z=Z(G)$ as its centre, and $d_r:G\to G^r$, for $r\geq 2$ as the diagonal map.

\begin{proposition}\label{lemma: nilpotent torsion free}
Let $G$ be a torsion free nilpotent group. Then the cohomological dimension of $\frac{G^r}{d_r(Z)}$ is finite.  
\end{proposition}
\begin{proof}
Since $G$ is a finitely generated torsion-free nilpotent group, there exists a central series
\[
G = G_0 \geq G_1 \geq \cdots \geq G_{n-1} = Z(G) \geq G_n = \{1\},
\]
where each quotient $G_i/G_{i+1}$ is free abelian, the sum of whose ranks equals the rank of $G$. Hence $\operatorname{cd}(G) = \operatorname{rk}(G)$, and therefore such groups always have finite cohomological dimension.

Suppose $H=\frac{G^r}{d_r(Z)}$.
To show that it has finite cohomological dimension, it suffices to show that it is torsion-free and nilpotent. As the class of torsion-free nilpotent groups $\mathcal{N}$ is closed under finite direct products and subgroups, both $d_r(Z)$ and $G^r$ lie in $\mathcal{N}$. Since the quotient of a nilpotent group by a nilpotent subgroup is again nilpotent, it follows that $H$ is nilpotent. Thus, it remains to show that $H$ is torsion-free.

Let $[(g_1,\dots,g_r)] \in H$ be of finite order. Then each $[g_i] \in G/Z$ has finite order for $1 \le i \le r$. Therefore, it is enough to prove that $G/Z$ is torsion-free. Fixing $1 \le i \le r$, denote $g_i$ simply by $g$.

Suppose, for contradiction, that $G/Z$ is not torsion-free. Then there exists $0 \neq gZ \in G/Z$ such that $g^n \in Z$ for some $n > 0$. Thus, for all $h \in G$ we have $g^nh = hg^n$. This implies $(hgh^{-1})^n = g^n$. By \cite[page 30, 2.1.2]{Lennox}, torsion-free nilpotent groups have the \emph{unique root property}, i.e., if $x^n = y^n$, then $x = y$. Hence $hgh^{-1} = g$ for all $h \in G$, which implies $g \in Z$, contradicting the assumption that $gZ \neq Z$. Therefore, $G/Z$ is torsion-free.

Hence $H$ is torsion-free and nilpotent, and the result follows.
\end{proof}

In what follows we obtain an upper bound on the higher topological complexity of aspherical Seifert fibered manifolds of type $M_O$ under certain conditions.
We begin by proving a higher analogue of \cite[Proposition 3.7]{grantfibsymm}, which will help us to obtain sharp upper bound on the higher topological complexity of aspherical manifolds.

\begin{proposition}\label{prop:tcnub}
Let $G$ be a torsion-free discrete group. Then
    $\TC_r(K(G,1))\leq \cat(G^r/d_r(Z))$.
\end{proposition}
\begin{proof}
The proof is a straightforward generalization of the proof of  \cite[Proposition 3.7]{grantfibsymm} for the higher topological complexity using  \cite[Lemma 2.4]{daundkar2023group} and \cite[Theorem3.1]{daundkar2023group}.
\end{proof}

\begin{proposition}\label{lem:tck-MO-ub}
Suppose  $G=\pi_1(M_O)$ with $\mathrm{cd}(G^r/d_r(Z))$ is finite. Then  $$\TC_r(M_O)\leq 3r.$$    
\end{proposition} 

\begin{proof}
It follows from \cite[Page 5]{Surveysf} that $Z$ contains the infinite cyclic group, that is, the set of integers $\Z$ up to the isomorphism.    Then using \Cref{prop:tcnub} we get the inequality 
$$\TC_r(M_O)\leq \cat\left(\frac{G^r}{d_r(Z)}\right).$$ 
Suppose $H=\frac{G^r}{d_r(Z)}$ and  $1$ denotes the identity of $G$. Then, we have a short exact sequence:
\[\begin{tikzcd}
   1 \arrow{r}& Z\arrow{r}{d_r|_{Z}} & G^r\arrow{r}{q}& H \arrow{r} & 1.
\end{tikzcd}\]
Since $\mathrm{cd}(H)$ is finite, from \cite[Theorem 5.5 (i)]{Bieri} we obtain the equality $\mathrm{cd}(H)= \mathrm{cd}(G^r)-\mathrm{cd}(Z)$.  
Moreover,  $K(G^r,1)\simeq M_O^r$ as $M_O$ is $K(G,1)$ space. Thus, $\mathrm{cd}(G^r)=3r$. Moreover, note that $\mathrm{cd}(Z)\geq 1$.
This gives us $\mathrm{cd}(H)\leq 3r-1$. Moreover, from \cite{EilenbergGanea}, we have $\cat(H)=\mathrm{cd}(H)+1$, implying the inequality $\TC_r(M_O)\leq \cat(H)=\mathrm{cd}(H)+1\leq 3r$.
\end{proof}

At this stage we note some important observations.

\begin{remark}\label{remk:heisen}\
\begin{enumerate}
    \item It is known that the center of $\pi_1(M_N)$  is trivial (see \cite[Section 2.3.1]{Surveysf}). Therefore, we cannot use \Cref{prop:tcnub} to improve the dimensional upper bound on $\TC_r(M_N)$.  

    \item It follows from \cite[Theorem 2]{teichner1996maximalnilpotentquotients3manifold} that the fundamental group of an aspherical oriented Seifert fibered manifold is nilpotent if  and only if it is  a Heisenberg group (see also \cite[page 9] {lee2001seifertmanifolds}). Moreover, for a torsion free nilpotent group $G$, the quotient group $G^r/d_r(Z)$ has finite cohomological dimension (see Proposition \ref{lemma: nilpotent torsion free}). Thus, for a Seifert manifold,  $M=K(G,1)$, with $G$ a Heisenberg group we have $\TC_r(M)\leq 3r$ using \Cref{lem:tck-MO-ub}.
 \end{enumerate}
\end{remark}

Next, we provide our first estimation of the higher topological complexity of the aspherical Seifert fibred manifolds.

\begin{theorem}\label{thm: TCofSeifert}
Suppose 
\[
M_O = (O,o,g \mid e : (a_1,b_1),\dots,(a_m,b_m))
\quad \text{and} \quad
M_N = (O,n,g \mid e : (a_1,b_1),\dots,(a_m,b_m)),
\]
where $g \ge 1$ and $n_2 > 2$. Then
\[
\TC_r(M_O), \TC_r(M_N) \in \{3r,3r+1\},
\]
for $r = 2$ whenever the following conditions hold;
\begin{enumerate}
    \item $a_1\equiv {2(\rm mod}~ 4)$ or
\item $a_k\equiv {2(\rm mod}~ 4)$ and $a_j\equiv {2(\rm mod}~ 4)$ for some $2\leq j\neq k\leq n_2$,
\end{enumerate}
and for $r \ge 3$ whenever $a_k\equiv 2(\rm {mod}~ 4)$ and $a_j\equiv 2(\rm {mod}~ 4)$ for some $1\leq j\neq k\leq n_2$.
\end{theorem}
\begin{proof}
We establish the result for $M_O$; similar arguments apply to $M_N$ as well.
Let $x=\alpha_j$ and $y=\alpha_k$ be in $H^1(M;\Z_2)$ and $p_i:M^r\to M$ be a projection onto the $i^{\text{th}}$ factor. Consider $\bar{x}_i=p_i^{\ast}(x)-p_1^{\ast}(x)$. Then note that $d_r^{\ast}(\bar{x}_i)=0$, where $d_r:M\to M^r$ is the diagonal map. Observe that the product $\prod_{i=2}^r\bar{x}_i\neq 0$ since  the expression contains a non-zero term $\prod_{i=2}^rp_i^{\ast}(x)=1\otimes \alpha_2\otimes \dots\otimes \alpha_2$. 

Let $\bar{y}_i=p_i^{\ast}(y)-p_1^{\ast}(y)$ and $B_2$ be the mod-$2$ Bockstein. Recall from  \Cref{remk:main123} $B_2(y)=\alpha_3^2$. Note that $B_2(\bar{y}_i)=B_2(p_i^{\ast}(y))-B_2(p_1^{\ast}(y))=p_i^{\ast}(B_2(y))-p_1^{\ast}(B_2(y))=\overline{B_2(y)}_i$ for $2\leq i\leq r$. 

Now consider the product $\prod_{i=2}^r\overline{B_2(y)}_i$. We  observe that the product $\prod_{i=2}^r\overline{B_2(y)}_i$ is non-zero as it contains the term $1\otimes \alpha_3^2\otimes \dots\otimes \alpha_3^2$. Then one can show the following equality by induction.
\vspace{-3mm}
\begin{align*}\label{eqn:66}
\overline{B_2(x)}_2\cdot \prod_{i=2}^r\bar{x}_i\cdot \prod_{i=2}^r\overline{B_2(y)}_i & =\binom{a_1}{2}^{r-1}\alpha_j^2\otimes \gamma \otimes\dots \otimes\gamma\\ &+\binom{a_1}{2}^{r-1}\gamma\otimes\alpha_j^2\otimes\gamma\otimes\dots \otimes\gamma \\&+\binom{a_1}{2}^{r-2}\bigg[\binom{a_1}{2}
+\binom{a_2}{2}\bigg]\gamma\otimes\alpha_k^2\otimes\gamma\otimes\dots \otimes \gamma\\&+ \binom{a_1}{2}^{r-2}\bigg[\binom{a_1}{2}+\binom{a_2}{2}\bigg]\alpha_k^2\otimes\gamma\otimes\dots \otimes \gamma. 
\end{align*}

To write the above expression in a simple form we introduce the notations: 
$$A_1=\alpha_j^2\otimes \gamma \otimes\dots \otimes\gamma,  \quad
A_2=\gamma\otimes\alpha_j^2\otimes\gamma\otimes\dots \otimes\gamma,
\quad A_3=\gamma\otimes\alpha_k^2\otimes\gamma\otimes\dots \otimes \gamma, $$
and $A_4= \alpha_k^2\otimes\gamma\otimes\dots \otimes \gamma$.
In these above notations, we rewrite $\overline{B_2(x)}_2 \cdot \prod_{i=2}^r \bar{x}_i \cdot \prod_{i=2}^r \overline{B_2(y)}_i$ as follows:
\begin{align*}
\overline{B_2(x)}_2 \cdot \prod_{i=2}^r \bar{x}_i \cdot \prod_{i=2}^r \overline{B_2(y)}_i 
&= \binom{a_1}{2}^{\, r-2} \left\{
\binom{a_1}{2}A_1
+ \binom{a_1}{2}A_2 \right. \\
&\quad \left.
+ \left[\binom{a_1}{2} + \binom{a_j}{2}\right]A_3
+ \left[\binom{a_1}{2} + \binom{a_j}{2}\right]A_4.
\right\}
\end{align*}
Observe that $\overline{B_2(x)}_2 \cdot \prod_{i=2}^r \bar{x}_i \cdot \prod_{i=2}^r \overline{B_2(y)}_i \neq 0$ if and only if 

$$\binom{a_1}{2}^{r-2}\neq 0 \text{ and } \binom{a_1}{2}A_1+\binom{a_1}{2}A_2 + \bigg[\binom{a_1}{2}
 +\binom{a_2}{2}\bigg]A_3+ \bigg[\binom{a_1}{2}
+\binom{a_2}{2}\bigg]A_4\neq 0.$$

Observe that from 
 \Cref{thm: cring1} we have the identity
\[\alpha_k^2=\binom{a_1}{2}\sum_{j=2}^{n_2}\beta_j+\binom{a_k}{2}\beta_k= \binom{a_1}{2}\sum_{j=2, j\neq 3}^{n_2}\beta_j+\bigg[\binom{a_1}{2}+\binom{a_k}{2}\bigg]\beta_k.\]
Note that $\alpha_k^2=0$ if and only if $\binom{a_1}{2}=0$ and $\binom{a_k}{2}=0$. 

Thus, by symmetry $\overline{B_2(x)}_2 \cdot \prod_{i=2}^r \bar{x}_i \cdot \prod_{i=2}^r \overline{B_2(y)}_i =0$ if and only if   $\binom{a_1}{2}^{r-2}= 0$ or 
\begin{enumerate}
\item[(i)] 
$\alpha_k^2=0$ or $\left[\binom{a_1}{2}+\binom{a_j}{2}\right]=0$ and 
\item[(ii)] $\alpha_j^2=0$ or $\binom{a_1}{2}=0$.
\end{enumerate}
From this we conclude that, for $r=2$, $\overline{B_2(x)}_2 \cdot \prod_{i=2}^r \bar{x}_i \cdot \prod_{i=2}^r \overline{B_2(y)}_i\neq 0$ if and only if  
\begin{enumerate}
    \item $a_1\equiv {2(\rm mod}~ 4)$ or
\item $a_k\equiv {2(\rm mod}~ 4)$ and $a_j\equiv {2(\rm mod}~ 4)$.
\end{enumerate}

and for $r\geq 3$ $\overline{B_2(x)}_2 \cdot \prod_{i=2}^r \bar{x}_i \cdot \prod_{i=2}^r \overline{B_2(y)}_i\neq 0$ if and only if $a_1\equiv {2(\rm mod}~ 4)$ and
\begin{enumerate}
    \item $a_1\equiv {2(\rm mod}~ 4)$ or
\item $a_k\equiv {2(\rm mod}~ 4)$ and $a_j\equiv {2(\rm mod}~ 4)$.
\end{enumerate}
The condition for $r\geq 3$ is equivalent to  $a_k\equiv {2(\rm mod}~ 4)$ and $a_j\equiv {2(\rm mod}~ 4)$ for some $1\leq j\neq k\leq n_2$.

Therefore, we know that $\overline{B_2(x)}_2 \cdot \prod_{i=2}^r \bar{x}_i \cdot \prod_{i=2}^r \overline{B_2(y)}_i\neq 0$ if and only if the hypotheses of the theorem satisfies. We now use the cohomological lower bound to conclude our result.
Note that \Cref{thm: c wt2} gives $\wgt_{e_r}(\overline{B_2(y)}_i)\geq 2$ and $\wgt_{e_r}(\overline{B_2(x)}_2)\geq 2$. Therefore, using \cite[Proposition 2]{GrantCweights} we get that
\vspace{-3mm}
\[\TC_r(M_O)>\wgt_{e_r}\bigg(\overline{B_2(x)}_2\cdot \prod_{i=2}^r\bar{x}_i\cdot \prod_{i=2}^r\overline{B_2(y)}_i\bigg)\geq 3r-1.\]
By performing the similar calculations we obtain $\TC_r(M_N)\geq 3r$.
The inequalities $$\TC_r(M_O),\TC_r(M_N)\leq 3r+1$$ follows from \cite[Theorem 3.9]{RUDIY2014}.
\end{proof}

\begin{remark}
Since $\bar{B}_2(\alpha_i)=\bar{\alpha}_i^2$, the bound can be obtained using cohomology weights and zero divisor cup length are the same.  We use the Bockstein notation in \Cref{thm: TCofSeifert} for comparison with the proof of the next theorem. 
\end{remark}

\begin{theorem}\label{prop:n2equals1higherTC}
Suppose $M_O= (O,o,g \mid e: (a_1,b_1),\dots, (a_m,b_m))$ and $M_N=(O,n,g \mid e: (a_1,b_1),\dots, (a_m,b_m))$ with
$g\geq 1$.
If $n_2= 1 \text{ or } 2$ and $n_p >2$ for some  odd prime $p$, $a_1'c_1\not\equiv 0 (\rm {mod } ~p)$ and $a_j'c_j+a_1'c_1 \not\equiv 0 (\rm {mod } ~p) $ for some  $j \leq n_p$.
Then $$\TC_r(M_O), \TC_r(M_N)\in\{3r, 3r+1\}.$$
\end{theorem}
\begin{proof}

Let $n_2=1$ or $2$ and there exist an odd prime $p$ such that $n_p>2$. 
Let $p$ be an odd prime. Let $k\leq n_p$ and $y_i=p_i^{\ast}(B_p(\alpha_k))$ for $1\leq i\leq r$, where $B_p$ is mod-$p$ Bockstein. 
Consider the higher zero divisors $\bar{y}_i=y_i-y_1$ for $2\leq i\leq r$ and their product 
$\prod_{i=2}^{r}\bar{y}_i$. Since $y_i$'s are of degree-$2$ cohomology classes, their squares are zero for degree reasons.
Therefore, we obtain the following expression:
\[\prod_{i=2}^{r}\bar{y}_i= y_2\cdots y_r- \sum_{i=2}^ry_1\cdots y_{i-1}\hat{y_i}y_{i+1}\cdots y_r,\] where $\hat{y_i}$ denotes that $y_i$ is missing in the product.

Let $j\leq n_p$ such that $j\neq k$ and $z_i=p_i^{\ast}(B_p(\alpha_j))$ for $1\leq i\leq r$.
We consider another higher zero divisor $\bar{z}_2=z_2-z_1$ and its product with $\prod_{i=2}^{r}\bar{y}_i$ as follows:
\[\bar{z}_2\cdot \prod_{i=2}^{r}\bar{y}_i=-y_1z_2y_3\cdots y_r -z_1y_2\cdots y_r\] because of the degree reasons. 
With same $j$ as before, we take $x_i=p_i^{\ast}(\alpha_j)$ for $1\leq i\leq r$ and consider 
 another set of higher zero divisors $\bar{x}_i=x_i-x_1$. 
Note that we have the generalized product rule 
\begin{equation}\label{eq: genprodform}
(a_1\otimes \cdots \otimes a_r)\cdot (b_1\otimes \cdots \otimes b_r)=(-1)^{\sum_{j=0}^{r-2} s_{r-j}\sum_{i=1}^{r-j-1}t_i }(a_1b_1\otimes \cdots \otimes a_rb_r),    
\end{equation}
where $|a_i|=s_i$ and $|b_i|=t_i$ for $1\leq i,j\leq r$.
Now consider the product $\prod_{i=2}^r\bar{x}_i$.
We use \eqref{eq: genprodform} to obtain the following
\[\prod_{i=2}^r\bar{x}_i=x_2\cdots x_r+ \sum_{i=2}^r (\pm)x_{1}\cdots \hat{x}_{i}\cdots x_{r} +P,\]
where $P$ is the sum of products containing square terms. Note that $\prod_{i=2}^r\bar{x}_i\neq 0$ as it contains the unique non-zero term $x_2\cdots x_r$.
Since we are going to multiply $\prod_{i=2}^r\bar{x}_i$ with $\bar{z}_2\cdot \prod_{i=2}^{r}\bar{y}_i$ and all $y_i$'s and $z_i$'s are of degree-$2$, the product of terms in $P$ with terms in $\bar{z}_2\cdot \prod_{i=2}^{r}\bar{y}_i$ becomes zero for degree reason.
Therefore, we have the product 
\begin{equation}\label{eq: zcls}
\prod_{i=2}^r\bar{x}_i\cdot \bar{z}_2\cdot \prod_{i=2}^{r}\bar{y}_i= -y_1z_2x_2y_3x_3\cdots y_rx_r-z_1y_2x_2\cdots y_rx_r+ Q,    
\end{equation}
where 
$Q=(-y_1z_2y_3\cdots y_r-z_1y_2\cdots y_r)\cdot  \sum_{i=2}^r (\pm)x_{1}\cdots \hat{x}_{i}\cdots x_{r}.$
Note that $Q$ contains terms $c_1\otimes \cdots \otimes  c_r$ such that  $|c_1|=3$. 
Therefore, the terms  \[R=-y_1z_2x_2y_3x_3\cdots y_rx_r-z_1y_2x_2\cdots y_rx_r\] in \eqref{eq: zcls} cannot get cancelled by the terms in $Q$.
Now after putting the explicit values of $y_i$'s and $z_i$'s and using relations in \Cref{thm: cring1}, we get the following
\[R= - \bigg[(a_j'c_j+E)E^{r-2}(-a_k'c_k\beta_k+E\beta_1)\otimes \gamma\otimes \cdots \otimes \gamma + E^{r-1}(-a_j'c_j\beta_j+E\beta_1)\otimes \gamma\otimes \cdots \otimes \gamma\bigg],\]\
where $E=a_1'c_1$.
One can observe that $R$ is nonzero if and only if $a_1'c_1\not\equiv 0 (\rm {mod } ~p)$ and $a_j'c_j+a_1'c_1 \not\equiv 0 (\rm {mod } ~p) $ for some  $j \leq n_p$.
Consequently, the product in \eqref{eq: zcls} is nonzero if the hypothesis of the theorem is satisfied.
Note that the $\wgt_{e_r}(\bar{z}_2)=\wgt_{e_r}(\bar{y}_i)=2$. Therefore, we get the lower bound  $\TC_r(M_O)\geq 3r$. 
The similar calculations can be done to prove the inequality $\TC_r(M_N)\geq 3r$. 
The inequalities  $\TC_r(M_O),\TC_r(M_N)\leq 3r+1$ follows from \cite[Theorem 3.9]{RUDIY2014}.
\end{proof}

\begin{corollary}\label{cor: equality}
    If $M_O$ satisfies the hypothesis of \Cref{thm: TCofSeifert} or \Cref{prop:n2equals1higherTC} and $G=\pi_1(M_O)$ with $\mathrm{cd}(G^r/d_r(Z))$ is finite, where $Z$ is the center of $G$. Then $$\TC_r(M_O)=3r.$$
\end{corollary}

\begin{theorem}\label{prop:n2equals0higherTC} 
Suppose $M_O= (O,o,g \mid e: (a_1,b_1),\dots, (a_m,b_m))$ and $M_N=(O,n,g \mid e: (a_1,b_1),\dots, (a_m,b_m))$ with $n_2=0$ and $(a_i,a_j)=1$ for all $1\leq i< j\leq m$ and $Ae+C\equiv 0(\rm{mod} ~ 2)$.
Let $s$ be a positive integer such that $b_i\equiv 0({\rm mod}~2)$ for $1\leq i\leq s$ and $b_i\not\equiv 0({\rm mod}~2)$ for $s+1\leq i\leq m$.
Suppose $\frac{1}{A}(\sum_{i=1}^{s}b_i'A_i+\frac{Ae+C}{2})$ is non-zero. 
Then $$3r-1\leq \TC_r(M_O), \TC_r(M_N)\leq 3r+1.$$
\end{theorem}
\begin{proof}
We have assumed $Ae+C\equiv 0({\rm mod}~2)$.  Therefore, by \Cref{thm: cring2},  $\alpha\in H^1(M;\Z_2)$.
Consider the higher zero divisors $\bar{x}_i=p_i^{\ast}(\alpha)-p_1^{\ast}(\alpha)$.  Observe that the product $\prod_{i=2}^r\bar{x}_i\neq 0$. Let $\lambda=\frac{1}{A}(\sum_{i=1}^{s}b_i'A_i+\frac{Ae+C}{2})$.
From \Cref{thm: cring2}, we have $B_2(\alpha)=\lambda\beta$.
For $1\leq l\leq g$, consider $\bar{\theta}_l=p_2^{\ast}(\theta_l)=p_1^{\ast}(\theta_l)$. 
One can observe that the product   $\prod_{i=2}^{r}\bar{x}_i\cdot \prod_{j=2}^{r}\bar{B}_2(\alpha)_j\cdot \bar{\theta}_l$ is non-zero as it contains the term $\lambda^{r-1}\theta_l\otimes\gamma\otimes\dots\otimes\gamma$ if $\lambda$ is non-zero. 
This gives 
\vspace{-3mm}
\[\TC_r(M_O)>\wgt_{e_r}\bigg(\overline{B_2(x)}_2\cdot \prod_{i=2}^r\bar{x}_i\cdot \prod_{i=2}^r\overline{B_2(y)}_i\bigg)\geq 3r-2\] using \cite[Proposition 2]{GrantCweights}.
Similar calculations can be done to show $\TC_r(M_N)\geq 3r-1$.
The inequalities  $\TC_r(M_O),\TC_r(M_N)\leq 3r+1$ follows from \cite[Theorem 3.9]{RUDIY2014}.
\end{proof}

\begin{corollary}
    If $M_O$ satisfies the hypothesis of \Cref{prop:n2equals0higherTC} and $G=\pi_1(M_O)$ with $\mathrm{cd}(G^r/d_r(Z))$ is finite, where  $Z$ is the centre of $G$. Then $\TC_r(M_O)\in\{3r-1,3r\}$. 
\end{corollary}
   
\begin{remark}
Observe that, in \Cref{prop:n2equals0higherTC} we have used mod-$2$ cohomology ring description of $M_O$ and $M_N$ to compute the cohomological lower bound on the $\TC_r(M_O)$ and $\TC_r(M_N)$.
One can check that the mod-$p$ cohomology ring description also gives the same cohomological lower bound on both $\TC_r(M_O)$ and $\TC_r(M_N)$.
\end{remark}

\begin{example}\label{example}
Consider $S^1\to M\to \Sigma_g$ be a $S^1$-bundle over the orientable surface of genus $g>0$ with $M$ orientable and $e>0$ as the Euler number. It is known that $M$ is aspherical  Seifert-fibered manifold of type $(O,o,g \mid e: (1,e))$. 
Then it follows from \Cref{prop:n2equals0higherTC} that the $3r-1\leq\TC_r(M)\leq 3r+1$.   
On the other hand, if $e=0$, then $M\cong \Sigma_g\times S^1$. In this case, we have $\TC_r(M)=2r$ if $g=1$ and $\TC_r(M)=2r+2$ otherwise. 
We note that $e=0$ case cannot be handled using \Cref{prop:n2equals0higherTC}.
Daundkar  considered the case $g=0$ in \cite[Corollary 5.8]{daundkar2023group}, that is $M$ is a lens space. In particular, it was shown that $\TC_r(M)=3r$.
\end{example}

\section{Wedge and connected sum of some 3-manifolds and concluding remarks}
In general, the problem of computing the higher topological complexity of arbitrary
$3$--manifolds remains widely open. Although a complete formula appears out of reach,
it is nevertheless possible to obtain sharp bounds, and in some cases exact values,
for specific classes of $3$--manifolds. In particular, since there is no known formula
expressing the higher topological complexity of a connected sum in terms of that of
its summands, a comprehensive classification seems unlikely at present.

In this section, we focus on certain classes of $3$--manifolds that are not
Seifert--fibered and obtain explicit computations and estimates for their higher
topological complexity. These results complement the main theorems of the paper and
highlight directions for further investigation.

\begin{proposition}\label{prop: cat conn sum}
Let $X=\#_{k}(S^2\times S^1)$. Then $
\cat(X)=3$.
\end{proposition}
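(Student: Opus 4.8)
The plan is to prove $\cat(X) = 3$ for $X = \#_k(S^2 \times S^1)$ by sandwiching the value between a cohomological lower bound and a dimension-type upper bound. First I would establish the lower bound $\cat(X) \geq 3$ by exhibiting a nonzero triple cup product in $H^{\ast}(X;\Z)$ (or $\Z_2$). The connected sum $\#_k(S^2\times S^1)$ is a closed orientable $3$-manifold whose cohomology, by the standard Mayer–Vietoris / connected-sum computation, is $H^0 = \Z$, $H^1 = \Z^k$ generated by classes $t_1,\dots,t_k$ (pulled back from the $S^1$ factors), $H^2 = \Z^k$ generated by classes $s_1,\dots,s_k$ (the $S^2$ factors), and $H^3 = \Z$. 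Poincar\'e duality pairs $t_i$ with $s_i$, so $t_i \smile s_i$ is the fundamental class, hence nonzero, while $t_i \smile t_j = 0$. The point is then that $t_i \smile s_i \neq 0$ with $s_i$ itself in the kernel of the augmentation/basepoint map and $t_i$ likewise, so the cup-length of the ideal of positive-degree classes is at least $2$; combined with $\cat(Y) \geq \cl(Y) + 1$ (the standard cup-length lower bound, \cite[Proposition 1.5]{CLOT}) this gives $\cat(X) \geq 3$.

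Next I would establish the upper bound $\cat(X) \leq 3$. The cheap argument is purely dimension-theoretic: for any CW complex $Y$ of dimension $d$ one has $\cat(Y) \leq d+1$, so since $X$ is a $3$-manifold we immediately get $\cat(X) \leq 4$. To improve this to $\leq 3$ I would invoke the sharper bound available for manifolds with free fundamental group, or more directly the Grossman/Dranishnikov-type estimate that for a closed $n$-manifold $Y$ with $n \geq 4$ one often has $\cat \leq n$, but in dimension $3$ one argues as follows: $X$ is homotopy equivalent to a wedge-like complex or, better, $\#_k(S^2\times S^1)$ fibers appropriately — actually the cleanest route is to use that $\#_k(S^2\times S^1)$ admits a handle decomposition with no $3$-handle up to homotopy is false, so instead I would use the fact that $\cat(Y) \leq 1 + \cat_{\text{of the 2-skeleton related piece}}$; concretely, $X \setminus \{pt\}$ deformation retracts onto its $2$-skeleton which is homotopy equivalent to $\bigvee_k S^1 \vee \bigvee_k S^2$, a space of category $2$, and one open set covering a neighborhood of the removed point is contractible, giving $\cat(X) \leq 2 + 1 = 3$.

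So the key steps in order are: (1) recall/derive the cohomology ring of $\#_k(S^2\times S^1)$, identifying the nonzero product $t_i \smile s_i$ in top degree; (2) deduce $\cl(X) \geq 2$ hence $\cat(X) \geq 3$ via the cup-length lower bound; (3) observe $X$ minus a point retracts to a $2$-complex homotopy equivalent to $\bigvee_k S^1 \vee \bigvee_k S^2$ with LS-category $2$; (4) cover $X$ by that set (thickened) together with a contractible ball around the point to conclude $\cat(X) \leq 3$; (5) combine (2) and (4).

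I expect the main obstacle to be step (3)–(4), namely making the upper bound $\cat(X) \leq 3$ genuinely rigorous rather than $\leq 4$: one must be careful that the open cover of $X$ really consists of three sets each null-homotopic in $X$, and that the "complement of a ball retracts to a $2$-complex of category $2$" argument is carried out so that the categorical sets in the $2$-complex thicken to categorical sets in $X$. An alternative that sidesteps this is to cite a known structural result — e.g. that closed orientable $3$-manifolds with infinite, non-free-abelian but free fundamental group have $\cat = 3$, or to use $\cat(X) \leq \dim X = 3$ whenever $X$ is $(\pi_1)$-aspherical-free plus simply-connected-after-one-cell, but since the paper elsewhere freely cites \cite{CLOT} and category of wedges, the clean two-line proof will be: $\cat(\bigvee_k S^1 \vee \bigvee_k S^2) = 2$, $X$ is obtained from this wedge by attaching a single $3$-cell, and attaching one cell raises category by at most $1$, so $\cat(X) \leq 3$; together with the lower bound this finishes the proof.
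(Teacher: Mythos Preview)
Your argument is correct. The lower bound via cup-length works exactly as you say: $t_i \smile s_i$ generates $H^3(X;\Z)$, so $\cl(X) \geq 2$ and hence $\cat(X) \geq 3$. Your final upper-bound argument is also sound: since $\pi_1(X) = F_k$ is free, the attaching maps of the $k$ two-cells are null-homotopic, so the $2$-skeleton of $X$ (equivalently $X \setminus \{\text{pt}\}$) is homotopy equivalent to $\bigvee_k S^1 \vee \bigvee_k S^2 = \Sigma\bigl(\bigvee_k S^0 \vee \bigvee_k S^1\bigr)$, a suspension with category $2$; attaching the single top $3$-cell raises category by at most one, giving $\cat(X) \leq 3$.

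The paper, however, takes a different and much shorter route: it simply cites \cite[Proposition 11]{catconnsum}, a general result on the LS-category of connected sums, together with the fact $\cat(S^2 \times S^1) = 3$, and is done in one line. Your approach is more elementary and self-contained, needing only standard LS-category tools (cup-length bound, category of a suspension, effect of a single cell attachment), whereas the paper's approach outsources everything to an external structural result about connected sums. The trade-off is length and transparency versus brevity: your proof could stand on its own without the reference, while the paper's proof handles all $k$ uniformly without touching the cohomology ring or CW structure of $X$.
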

\vspace{-3mm}
\begin{proof}
The proof follows from \cite[Proposition 11]{catconnsum} and the fact that $\cat(S^2\times S^1)=3$.
\end{proof}

\begin{theorem}
Let $X=\#_{k}(S^1\times S^2)$. Then $\TC_r(X)=2r+1$.
\end{theorem}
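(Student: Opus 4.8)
The plan is to establish matching upper and lower bounds $2n+1 \le \TC_n(X) \le 2n+1$ for $X = \#_k(S^1\times S^2)$. For the upper bound, I would first invoke the general inequality $\TC_n(X) \le \cat(X^n)$ from \cite[Corollary 3.3]{RUDIY2014}, together with the subadditivity $\cat(X^n) \le n(\cat(X)-1)+1$. By \Cref{prop: cat conn sum} we have $\cat(X) = 3$, so this immediately gives $\TC_n(X) \le 2n+1$. (For $k=0$, i.e. $X = S^1\times S^2$, one could alternatively use the product formula $\TC_n(Y\times Z) \le \TC_n(Y)+\TC_n(Z)-1$ with $\TC_n(S^1) = n$ and $\TC_n(S^2) = n+1$, but the connected-sum case needs the $\cat$ route.)

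For the lower bound, the strategy is to exhibit a product of $2n$ zero-divisors in $H^*(X^n)$ that is nonzero, which by Rudyak's cup-length bound \eqref{eq: lb higher tc} yields $\TC_n(X) \ge 2n+1$. Recall $H^*(S^1\times S^2;\Z) = \Z[a,b]/(a^2, b^2, ab - ba)$ with $|a|=1$, $|b|=2$; more generally $H^*(\#_k(S^1\times S^2);\Z)$ has, in positive degrees, classes $a_1,\dots,a_k$ in degree $1$ and $b_1,\dots,b_k$ in degree $2$ with $a_i^2 = 0$, $b_i b_j = 0$, and the only nonvanishing products being $a_i b_i = b_i a_i$, a generator of $H^3(X) \cong \Z$ (all products of classes from different "handles" vanish). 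Fix one handle, say with generators $a := a_1 \in H^1(X)$ and $b := b_1 \in H^2(X)$. For the diagonal $d_n : X \to X^n$ and projections $p_i : X^n \to X$, set $\bar a_i = p_i^*(a) - p_1^*(a)$ and $\bar b_i = p_i^*(b) - p_1^*(b)$ for $2 \le i \le n$; these are $2n-2$ zero-divisors (each in the kernel of $d_n^*$). I would then consider the product
\[
\bar a \cdot \bar b \cdot \prod_{i=2}^n \bar a_i \cdot \prod_{i=2}^n \bar b_i,
\]
where $\bar a = 1\otimes a - a\otimes 1$ and $\bar b = 1\otimes b - b\otimes 1$ are the "$i=1$ versus the rest" zero divisors built from the first coordinate — more carefully, I need $2n$ zero-divisors total, so I would use $\bar a_i, \bar b_i$ for $2\le i\le n$ (that is $2n-2$ of them) plus two more obtained from another handle's generators $a', b'$, say $\bar{a'}_2$ and $\bar{b'}_2$, chosen so that the surviving monomials do not annihilate. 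The key computation is to expand the product and identify a single monomial $c_1 \otimes c_2 \otimes \cdots \otimes c_n$ (with each $c_j$ a product of degree-$1$ and degree-$2$ classes in $H^*(X)$) that appears with coefficient $\pm 1$ and is not cancelled by any other term; the natural candidate is the term where the first coordinate picks up $a\cdot b$ (or $a'\cdot b'$) contributing the degree-$3$ generator, and each remaining coordinate picks up $a\cdot b$ from $\bar a_i \cdot \bar b_i$.

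The main obstacle I anticipate is the bookkeeping in that expansion: because $a^2 = 0$ and $b_ib_j = 0$, most of the $2^{2n}$ terms vanish, but one must check carefully that exactly one monomial survives, or at least that the surviving monomials cannot cancel (they live in different graded pieces of $H^*(X^n) = H^*(X)^{\otimes n}$ indexed by which coordinate received which class, so cancellation can only happen within a fixed "type"). Concretely, $\bar a_i \cdot \bar b_i = p_i^*(ab) - p_i^*(a)p_1^*(b) - p_1^*(a)p_i^*(b) + p_1^*(ab)$; multiplying these over $i=2,\dots,n$ and then by $\bar a \cdot \bar b$ and cross-terms, the monomial with $ab$ in every slot $2,\dots,n$ and $a'b'$ (from the second handle, which commutes with and is not killed by products involving only $a,b$ in a different coordinate — here I must be slightly careful since $a'b'$ in the same coordinate as nothing else is fine, but $a'b'\cdot ab = 0$ in a single $H^*(X)$, so I should instead place the $a'b'$ contribution in coordinate $1$ only) is the one that survives with coefficient $\pm 1$. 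Once this nonvanishing monomial is pinned down, \eqref{eq: lb higher tc} gives $\TC_n(X) \ge 2n+1$, and combined with the upper bound we conclude $\TC_n(X) = 2n+1$, completing the proof.
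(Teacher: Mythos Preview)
Your proposal is correct and takes essentially the same approach as the paper: the upper bound comes from $\TC_n(X)\le\cat(X^n)\le 2n+1$ via \Cref{prop: cat conn sum}, and the lower bound from the nonzero $2n$-fold zero-divisor product $\bar{a'}_2\cdot\bar{b'}_2\cdot\prod_{i=2}^n\bar a_i\cdot\prod_{i=2}^n\bar b_i$ with surviving monomial $a'b'\otimes ab\otimes\cdots\otimes ab$, which is precisely the paper's product $\bar{w}_2\cdot\bar{z}_2\cdot\prod_{i=2}^n\bar{u}_i\cdot\prod_{i=2}^n\bar{v}_i$ (surviving term $y_1y_2\otimes x_1x_2\otimes\cdots\otimes x_1x_2$) in different notation. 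One small slip in your aside: it is $k=1$, not $k=0$, that gives $X=S^1\times S^2$, and there the product formula yields only $\TC_n\le 2n$, so the argument genuinely requires $k\ge 2$ and a second handle---exactly as both you (after your self-correction) and the paper use.
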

\vspace{-3mm}
\begin{proof}
Here we prove the theorem for $k=2$. A similar argument works for the general case.
Recall that the integral cohomology ring of $X$ is given as follows: 
\vspace{-2mm}
\[H^{\ast}(X;\Z)\cong \frac{ \Lambda[x_1,x_2]\oplus\Lambda[y_1,y_2]}{\left<x_1x_2+x_2x_1, y_1y_2+y_2y_1, x_1x_2-y_1y_2, x_ix_j, y_jx_i, \right>},\]
where $\Lambda$ is the exterior product over integers and $|x_i|=i=|y_i|$ for $i=1,2$. Let $u=x_1$, $v=x_2$, $w=y_1$ and $z=y_2$.
Let $\bar{u}_i=p_i^{\ast}(x_1)-p_1^{\ast}(x_1)$ , $\bar{v}_i=p_i^{\ast}(x_2)-p_1^{\ast}(x_2)$, $\bar{w}_i=p_i^{\ast}(y_1)-p_1^{\ast}(y_1)$ and $\bar{z}_i=p_i^{\ast}(y_2)-p_1^{\ast}(y_2)$. Then $\bar{u}_i$, $\bar{v}_i$ , $\bar{w}_i$ and $\bar{z}_i$ are in $\ker(d_r^{\ast})$. 
Note that the products 
$\prod_{i=2}^{r}\bar{u}_i$ is  non-zero as it contain the terms $1\otimes x_1\otimes\dots\otimes x_1$. Similarly, $\prod_{i=2}^{r}\bar{v}_i$ is non-zero.
We now consider the product $\bar{w}_2\cdot\bar{z}_2\cdot \prod_{i=2}^{r}\bar{u}_i\cdot \prod_{i=2}^{r}\bar{v}_i$, which is non-zero as it contains the term $y_1y_2\otimes x_1x_2\otimes \dots\otimes x_1x_2$ and $x_1x_2$ generates $H^3(S^2\times S^1)$. 

By \cite[Proposition 3.4]{RUD2010}, we get that $2r+1\leq\TC_r(X)$. We then observe that $\TC_r(X)\leq \cat(X^r)\leq 2r+1$ as $\cat(X)=3$. This concludes the result.
\end{proof}

The topological complexity of wedges and connected sums of spaces has been studied by Dranishnikov in \cite{Dr3} and jointly with Sadykov in \cite{catconnsum, Drani}. More recently, Neofytidis investigated this problem for aspherical manifolds.
However, the relationship between $\TC(M_1\# M_2)$ and $\TC(M_1\vee M_2)$ is not
fully understood. In particular, it is natural to ask whether the inequality
\[
\TC(M_1\# M_2)\leq \TC(M_1\vee M_2)
\]
holds for a given pair of manifolds. In \cite[Corollary~5.2]{neofytidis2022},
Neofytidis showed that the topological complexity of the connected sum of a negatively
curved $4$--manifold with nonzero second Betti number and any aspherical
$4$--manifold is equal to $9$.

In contrast, we show below that the topological complexity of the wedge of closed,
orientable, aspherical $3$--manifolds (including, for instance, the Seifert manifolds
considered earlier) is equal to $7$. As a consequence, for these manifolds the
inequality
\[
\TC(M_1\# M_2)\leq \TC(M_1\vee M_2)
\]
does indeed hold. Our proof follows an argument similar in spirit to that of
\cite[Corollary~5.2]{neofytidis2022}.

\begin{proposition}
Let $M_i$ be closed, orientable, aspherical $3$--manifolds for $1 \le i \le k$. Then
\begin{equation}\label{eq: tc of wedge}
\TC(M_1 \vee \cdots \vee M_k) = 7.    
\end{equation}

Moreover, if one of the $M_i$ is negatively curved with $H_2(M_i,\mathbb{Q})\neq 0$, then 
\begin{equation}\label{eq: tc of connected sum}
6\leq \TC(M_1\#\dots \# M_k)\leq 7.    
\end{equation}

\end{proposition}

\begin{proof}
Let $B\pi_1(M_1 \# \cdots \# M_k)$ denote the classifying space of
$\pi_1(M_1 \# \cdots \# M_k)$. Since each $M_i$ is aspherical, we have
\[
\begin{aligned}
\TC(M_1 \vee \cdots \vee M_k)
&= \TC\big(B\pi_1(M_1 \# \cdots \# M_k)\big) \\
&= \TC\big(\pi_1(M_1) \ast \cdots \ast \pi_1(M_k)\big).
\end{aligned}
\]

Let $G_i = \pi_1(M_i)$. Using \cite[Theorem~2]{Drani} iteratively, we obtain
\[
\TC\big(\pi_1(M_1) \ast \cdots \ast \pi_1(M_k)\big)
=
\max_{\substack{1 \le i \le k \\ 1 \le s \le k-1}}
\left\{
\TC(\pi_1(M_i)),\;
\mathrm{cd}\big((G_1 \ast \cdots \ast G_s)\times G_{s+1}\big) +1
\right\}.
\]
Note that
$
\mathrm{cd}\big((G_1 \ast \cdots \ast G_s)\times G_{s+1}\big) = 6
\quad \text{for all } 1 \le s \le k-1,
$
since
\[
K\big((G_1 \ast \cdots \ast G_s)\times G_{s+1},1\big)
\cong
(M_1 \# \cdots \# M_s) \times M_{s+1}
\]
is an orientable $6$-manifold.
Therefore, $\TC(M_1 \vee \cdots \vee M_k) = 7.$

The other inequality $6\leq \TC(M_1\#\dots \# M_k)\leq 7$ follows from \cite[Corollary 1.2]{neofytidis2022}.
\end{proof}

We now extend \eqref{eq: tc of wedge} to the higher setting. To this end, we first establish several auxiliary results. 
\begin{lemma}\label{lem: retract}
Let $A\subseteq X$ be a retract. Then $\TC_r(A)\leq \TC_r(X)$.
\end{lemma}
The proof the above lemma is a straightforward analogue of  \cite[Lemma~4.25]{F-book} for higher TC. 

\begin{proposition}\label{prop: higher tc wedge}
For any path connected topological spaces $X$ and $Y$, we have

\begin{enumerate}
    \item if $r$ is even, $$\TC_r(X\vee Y)\geq \mathrm{max}\{\TC_r(X), \TC_r(Y), \cat((X\times Y)^{\lfloor \frac{r}{2}\rfloor})\},$$ and, 
    \item if $r$ is odd, $$\TC_r(X\vee Y)\geq \mathrm{max}\{\TC_r(X), \TC_r(Y), \cat((X\times Y)^{\lfloor \frac{r}{2}\rfloor}\times X), \cat((X\times Y)^{\lfloor \frac{r}{2}\rfloor}\times Y)\}.$$
\end{enumerate}

\end{proposition}
\begin{proof}
The proof follows arguments analogous to those in \cite[Theorem 3.6]{Dr3}.
Since both $X$ and $Y$ are retracts of $X\vee Y$, 
the inequalities $\TC_r(X), \TC_r(Y)\leq \TC(X\vee Y)$ follow from Lemma \ref{lem: retract}.

First, consider the case $r=2k$. We need to show the inequality $\cat((X\times Y)^{\lfloor \frac{r}{2}\rfloor})\leq \TC_r(X\vee Y)$.
Note that, we can identify $(X\times Y)^k$ as a subspace of $(X\vee Y)^r$ and assume that it is covered by $\TC_r(X\vee Y)$-many open sets $U$ such that all projections $pr_i: U\to X\vee Y$ for $1\leq i\leq r$, are homotopic to each other. Therefore, we can choose a homotopy $H_U:U\times I\to X\vee Y$ such that $H_U((x_1,y_1),\dots, (x_k,y_k),\frac{2i-2}{r-1})=x_i$ and $H_U((x_1,y_1),\dots, (x_k,y_k),\frac{2i-1}{r-1})=y_i$ for $1\leq i\leq k$.
Denote $\widetilde{xy} =((x_1,y_1),\dots, (x_k,y_k))$. Suppose $r_X:X\vee Y\to X$ and $r_Y:X\vee Y\to Y$ are the retraction maps. 
Then define $G_U:U\times I\to (X\times Y)^k$ by 
\[
G_U(\widetilde{xy}, t)
:=
\bigl(
r_X H_U\!\left(\widetilde{xy}, \tfrac{t}{r-1}\right),
r_Y H_U\!\left(\widetilde{xy}, \tfrac{1-t}{r-1}\right),
\ldots,
r_X H_U\!\left(\widetilde{xy}, \tfrac{t+r-2}{r-1}\right),
r_Y H_U\!\left(\widetilde{xy}, \tfrac{r-1-t}{r-1}\right)
\bigr).
\]
Observe that $G(\widetilde{xy},0)=\widetilde{xy}$ and $G_U(\widetilde{xy},1)=(v_0,\dots, v_0)$, where $v_0$ is the wedge point. This implies $U$ is a contractible subspace of $(X\times Y)^k$. 

We now consider the case $r=2k+1$. We show the inequality $\cat((X\times Y)^{\lfloor \frac{r}{2}\rfloor})\times X\leq \TC_r(X\vee Y)$. 

Similarly, as in the first case, we can consider $(X\times Y)^k\times X\subseteq (X\vee Y)^r$ and assume that it is covered by $\TC_r(X\vee Y)$-many open sets $U$ such that all projections $pr_i: U\to X\vee Y$ for $1\leq i\leq r$ are homotopic to each other.
We can choose a homotopy $H_U:U\times I\to X\vee Y$ such that $H_U((x_1,y_1),\dots, (x_k,y_k),\frac{2i-2}{r-1})=x_i$ for  $1\leq i\leq k+1$ and $H_U((x_1,y_1),\dots, (x_k,y_k),\frac{2i-1}{r-1})=y_i$ for $1\leq i\leq k$.
Denote $\widetilde{xy} =((x_1,y_1),\dots, (x_k,y_k),x_{k+1})$.
Define the homotopy $G_U: U\times I\to (X\times Y)^k\times X$ by
\[
G_U(\widetilde{xy}, t)
:=
\bigl(
r_X H_U\!\left(\widetilde{xy}, \tfrac{t}{r-1}\right),
r_Y H_U\!\left(\widetilde{xy}, \tfrac{1-t}{r-1}\right),
\ldots,
r_Y H_U\!\left(\widetilde{xy}, \tfrac{t+r-3}{r-1}\right),
r_X H_U\!\left(\widetilde{xy}, \tfrac{r-1-t}{r-1}\right)
\bigr).
\]
This homotopy contracts $U$ to a point in $(X\times Y)^k\times X$. This concludes the proof. 
\end{proof}

In general, for path-connected spaces $X$ and $Y$, the inequality $$\cat(X),\cat(Y)\leq \cat(X\times Y)$$ holds. As a consequence, we have the following corollary.
\begin{corollary}
The following inequality holds for any $r$ 
\[\TC_r(X\vee Y)\geq  \mathrm{max}\{\TC_r(X), \TC_r(Y), \cat((X\times Y)^{\lfloor \frac{r}{2}\rfloor})\}.\]     
\end{corollary}

By iteratively applying Proposition~\ref{prop: higher tc wedge}, one obtains the following straightforward extension.
\begin{proposition}\label{prop: iterated wedge sum}
 Let $X_i$ be path connected topological spaces for $1\leq i\leq k$ and $Z=X_1\vee\dots \vee X_k$. Then if $r$ is even, we have
\[
\TC_r(Z)
\geq 
\max_{\substack{1 \le i \le k \\ 1 \le j \le k-1}}
\left\{
\TC_r(X_i),\;
\cat\big((X_j\times \widetilde{X}_j)^{\lfloor \frac{r}{2}\rfloor}\big)
\right\},
\] 
  and, if $r$ is odd, we have
\[
\TC_r(Z)
\geq 
\max_{\substack{1 \le i \le k \\ 1 \le j \le k-1}}
\left\{
\TC_r(X_i),\;
\cat\big((X_j\times\Tilde{X}_j)^{\lfloor \frac{r}{2}\rfloor} \times X_j\big), \cat\big((X_j\times \widetilde{X}_j)^{\lfloor \frac{r}{2}\rfloor}\times \widetilde{X}_j\big)
\right\},
\] 
 where $\widetilde{X}_j=(X_{j+1}\vee\dots\vee X_k)$.
\end{proposition}

\begin{proposition}\label{prop: higher tc of iterated wedge}
Let $M_i$ be closed, orientable, aspherical $3$--manifolds for $1 \le i \le k$. Then
\[
\TC_r(M_1 \vee \cdots \vee M_k) = 3r+1.
\]    
\end{proposition}

\begin{proof}
The upper bound $\TC_r(M_1 \vee \cdots \vee M_k)\leq 3r+1$ follows from the dimensional upper bound on the higher topological complexity.

Since $M_i$'s are aspherical oriented $3$-manifolds, $(M_{k-1}\times M_k)^{\lfloor \frac{r}{2}\rfloor}$ and $(M_{k-1}\times M_k)^{\lfloor \frac{r}{2}\rfloor} \times M_{k-1}$ are also aspherical, oriented manifolds of dimension $6\lfloor \frac{r}{2}\rfloor $ and $6\lfloor \frac{r}{2}\rfloor+3$, respectively. Therefore, their cohomological dimensions are the same as their usual dimensions i.e., 
$$\mathrm{cd}((M_{k-1}\times M_k)^{\lfloor \frac{r}{2}\rfloor})=6\bigg\lfloor \frac{r}{2}\bigg\rfloor$$ and $$\mathrm{cd}((M_{k-1}\times M_k)^{\lfloor \frac{r}{2}\rfloor} \times M_{k-1})=6\bigg\lfloor \frac{r}{2}\bigg\rfloor+3.$$
Then, it follows from \cite{EilenbergGanea} that $\cat((M_{k-1}\times M_k)^{\lfloor \frac{r}{2}\rfloor})=3r+1$ if $r$ is even and  $\cat((M_{k-1}\times M_k)^{\lfloor \frac{r}{2}\rfloor} \times M_{k-1})=3r+1$ if $r$ is odd.
Finally, the lower bound $\TC_r(M_1\vee\dots\vee M_k)\geq 3r+1$ follows from Proposition  \ref{prop: iterated wedge sum}.
\end{proof}

\vspace{.5cm}

\subsection*{Further directions}
It seems reasonable to expect that  the  higher analogue of the inequality
\eqref{eq: tc of connected sum} should be true. Pursuing this direction would likely require
extending the results of Neofytidis \cite{neofytidis2022} to the higher setting, 
which would broaden the scope of the present work. 

Another question that would be interesting to explore arises in the case of Proposition~\ref{prop: higher tc wedge}. Although our arguments  do not require  a sharp lower bound, it will be useful to know whether the bound is sharp in general. 
This  would likely require techniques different
from those developed by Dranishnikov \cite{Drani}.

\vspace{0.5cm}

\noindent\textbf{Acknowledgment}: We thank the anonymous referee for their valuable comments and suggestions, which have significantly improved the quality and scope of the paper. We are also grateful to Mark Grant and Christoforos Neofytidis for sharing his insights on our work.

Navnath Daundkar acknowledges the support of the Indian Institute of Technology Bombay, where a major part of this work was carried out during his postdoctoral position. He also gratefully acknowledges the support of the National Board for Higher Mathematics (NBHM) through Grant No.~0204/10/(16)/2023/R\&D-II/2789 and the DST--INSPIRE Faculty Fellowship (Faculty Registration No.~IFA24-MA218), Department of Science and Technology, Government of India.
A substantial portion of this project, carried out by S.~Thandar, was undertaken during his time at IIT Bombay, where he was supported by a Senior Research Fellowship from the University Grants Commission (UGC), India. The remaining work was completed during his tenure as a Visiting Fellow at the Tata Institute of Fundamental Research (TIFR).

\bibliographystyle{plain} 
\bibliography{references}

\end{document}